\newcommand{\DateOfPub}[1]{}
\definecolor{Raspberry}{HTML}{E30B5C}
\newtheorem{theorem}{Theorem}%[subsection]
\newtheorem{proposition}[theorem]{Proposition}
\newtheorem{lemma}[theorem]{Lemma}
\newtheorem{corollary}[theorem]{Corollary}
\theoremstyle{remark}
\newtheorem{remark}{Remark}
\theoremstyle{definition}
\newtheorem{assumption}{Assumption}
\newtheorem{definition}{Definition}
\newcommand\EatDot[1]{}
\newcommand{\bX}{\boldsymbol{X}}
\newcommand{\bZ}{\boldsymbol{Z}}
\newcommand{\bu}{\boldsymbol{u}}
\newcommand{\bv}{\boldsymbol{v}}
\newcommand{\bw}{\boldsymbol{w}}
\newcommand{\bx}{\boldsymbol{x}}
\newcommand{\bz}{\boldsymbol{z}}
\newcommand{\bvarepsilon}{\boldsymbol{\varepsilon}}
\newcommand{\bzeta}{\boldsymbol{\zeta}}
\newcommand{\btheta}{\boldsymbol{\theta}}
\newcommand{\bfA}{\mathbf{A}}
\newcommand{\bfI}{\mathbf{I}}
\newcommand{\bfU}{\mathbf{U}}
\newcommand{\bLambda}{\mathbf{\Lambda}}
\newcommand{\bSigma}{\mathbf{\Sigma}}
\newcommand{\bbI}{\mathbb{I}}
\newcommand{\bbM}{\mathbb{M}}
\newcommand{\bbS}{\mathbb{S}}
\newcommand{\bbT}{\mathbb{T}}
\newcommand{\calC}{\mathcal{C}}
\newcommand{\calM}{\mathcal{M}}
\newcommand{\calO}{\mathcal{O}}
\newcommand{\calT}{\mathcal{T}}
\newcommand{\N}{\mathbb{N}}
\newcommand{\R}{\mathbb{R}}
\newcommand{\E}{\mathbb{E}}
\renewcommand{\Pr}{\mathbb{P}}
\newcommand{\iid}{\textrm{i.i.d.}}
\newcommand{\Bernoulli}{\mathrm{Ber}}
\newcommand{\Gaussian}{\mathcal{N}}
\newcommand{\BM}{\boldsymbol{W}}
\newcommand{\RE}{H}
\newcommand{\tr}{\mathrm{tr}}
\newcommand{\diag}{\mathrm{diag}}
\newcommand{\diff}{\mathrm{d}}
\renewcommand{\epsilon}{\varepsilon}
\newcommand{\sgn}{\mathrm{sgn}}
\newcommand{\zero}{\mathbf{0}}
\newcommand{\er}{\mathbf{r}}
\newcommand{\Risk}{\mathcal{R}}
\newcommand{\PS}{B[R]}
\newcommand{\PSULLN}{B^{p_{n}}[R]}
\newcommand{\samplespace}{\bbM}
\newcommand{\parspace}{\bbT}
\newcommand{\paralgebra}{\calT}
\renewenvironment{abstract}{%
  \if@twocolumn
    \section*{\abstractname}%
  \else
    \small
    \quotation
    \par\noindent{\bfseries\abstractname.}
  \fi}
  {\if@twocolumn\else\endquotation\fi}
\newenvironment{keywords}{%
  \if@twocolumn
    \par\noindent{\bfseries Keywords:}
  \else
    \small
    \quotation
    \par\noindent{\bfseries Keywords:}
  \fi}
  {\if@twocolumn\else\endquotation\fi}
\newenvironment{msc}{%
  \if@twocolumn
    \par\noindent{\bfseries Keywords:}
  \else
    \small
    \quotation
    \par\noindent{\bfseries MSC2020 Subject Classification:}
  \fi}
  {\if@twocolumn\else\endquotation\fi}
\title{Dimension-free uniform concentration bound for logistic regression}
\author{Shogo Nakakita\thanks{Komaba Institute for Science, University of Tokyo. Email address: \texttt{nakakita@g.ecc.u-tokyo.ac.jp}}}
\date{}
\begin{document}
\maketitle

\begin{abstract}
    We provide a novel dimension-free uniform concentration bound for the empirical risk function of constrained logistic regression. 
    Our bound yields a milder sufficient condition for a uniform law of large numbers than conditions derived by the Rademacher complexity argument and McDiarmid's inequality.
    The derivation is based on the PAC-Bayes approach with second-order expansion and Rademacher-complexity-based bounds for the residual term of the expansion.
\end{abstract}

\begin{keywords}
    Effective rank, logistic regression, uniform concentration bound, uniform law of large numbers
\end{keywords}

\begin{msc}
    Primary 62J12; secondary 62F12
\end{msc}

\section{Introduction}

The logistic regression model is a fundamental binary classification model in statistics and machine learning.
Given a sequence of $\R^{p}\times\{0,1\}$-valued independent and identically distributed (i.i.d.) random variables $\{(\bX_{i},Y_{i}); i=1,\ldots,n\}$, it supposes $Y_{i}$ as conditionally Bernoulli-distributed random variables such that for some $\btheta\in\R^{p}$, for all $i=1,\ldots,n$ and $\bx\in\R^{p}$,
\begin{equation}\label{eq:logisticreg}
    Y_{i}|\bX_{i}=\bx\sim \Bernoulli(\sigma(\langle \bx,\btheta\rangle)),
\end{equation}
where $\sigma(t)=1/(1+\exp(-t))$ with $t\in\R$ is the link function.
Each component of $\btheta$ explains the relationship between the corresponding component of $\bX_{i}$ and the conditional probability $\Pr(Y_{i}=1|\bX_{i})$.
The model is widely used for academic and industrial purposes as its interpretations are simple.

Our interest is the estimation of $\btheta$ with good prediction performance under high-dimensional settings.
To estimate $\btheta$, we frequently consider the minimization problem of the following empirical risk function (or the $(-1/n)$-scaled log-likelihood function):
\begin{equation}\label{eq:risk:empirical}
    \Risk_{n}(\btheta):=\frac{1}{n}\sum_{i=1}^{n}\left(-Y_{i}\log\sigma(\langle \bX_{i},\btheta\rangle)-(1-Y_{i})\log(1-\sigma(\langle \bX_{i},\btheta\rangle)\right).
\end{equation}
We expect that the minimization of $\Risk_{n}(\btheta)$ is a good approximation of the minimization of the unknown population risk function $\Risk(\btheta):=\E[\Risk_{n}(\btheta)]$.
If it holds true, the minimizers of $\Risk_{n}(\btheta)$ also achieve small population risk and thus good prediction performance.
Under the low-dimensional setting where $p$ is fixed in $n$ and $n\to\infty$, the classical argument for maximum likelihood estimation validates this idea.
However, this idea becomes difficult to justify in situations where $p$ is large relative to $n$.
For example, \citet{sur2019modern} point out that the minimizers of the empirical risk $\Risk_{n}(\btheta)$ can perform poorly as the estimators of the minimizers of the population risk $\Risk(\btheta)$ under high-dimensional settings.
In contrast, our study examines when the minimization of $\Risk_{n}(\btheta)$ is a good approximation of the minimization of $\Risk(\btheta)$ even with large $p$.

In particular, we study uniform concentration bounds and a uniform law of large numbers as their corollary for $\Risk_{n}(\btheta)$ around $\Risk(\btheta)$ on $\PS$, where $\PS=\{\btheta'\in\R^{p};\|\btheta'\|_{2}\le R\}$ with $R\ge0$ is the known bounded parameter space.
Let us consider the following ball-constrained minimization problem (ball-constrained logistic regression) instead of the unconstrained minimization on $\R^{p}$:
\begin{equation}\label{eq:constrainedlr}
    \text{minimize }\Risk_{n}\left(\btheta\right)\text{ subject to }\left\|\btheta\right\|_{2}\le R.
\end{equation}
This is a smooth convex optimization problem on a bounded convex set; it has solutions, which we can find efficiently.
Note that constraints on balls or spheres are not only mild but also typical in previous studies \citep{kuchelmeister2024finite,hsu2024sample}.
If we can conclude that $\Risk_{n}(\btheta)$ is uniformly close to $\Risk(\btheta)$ on $\PS$, then solving the minimization problem \eqref{eq:constrainedlr} (i.e., maximum likelihood estimation with the parameter space $\PS$) is a good approximation of the minimization of $\Risk(\btheta)$ on $\PS$.
If the following uniform law of large numbers holds, then we can support this idea asymptotically:
\begin{equation}\label{eq:ulln}
    \lim_{n\to\infty}\sup_{\btheta\in\PS}\left|\Risk_{n}(\btheta)-\Risk(\btheta)\right|=0\text{ almost surely.}
\end{equation}
The uniform law \eqref{eq:ulln} is one of the most fundamental arguments in large-sample theory.
It concludes that the minimization of the empirical risk $\Risk_{n}(\btheta)$ is asymptotically equivalent to the minimization of the population risk $\Risk(\btheta)$ \citep{van2000asymptotic}.
To derive sufficient conditions for the uniform law under high-dimensional settings, we analyse non-asymptotic uniform concentration bounds on $\sup_{\btheta\in\PS}|\Risk_{n}(\btheta)-\Risk(\btheta)|$.

A reasonable hypothesis is that $\Risk_{n}(\btheta)$ is uniformly close to $\Risk(\btheta)$ if an intrinsic dimension of the problem is sufficiently small in comparison to $n$.
A promising intrinsic dimension is the effective rank of $\bSigma:=\E[\bX_{1}\bX_{1}^{\top}]$ defined as $\er(\bSigma):=\tr(\bSigma)/\|\bSigma\|$ as long as $\|\bSigma\|>0$, where $\|\bSigma\|$ is the spectral norm of $\bSigma$.
The effective rank often appears in recent studies on possibly infinite-dimensional statistical models \citep{koltchinskii2017concentration,bartlett2020benign,zhivotovskiy2024dimension}.
The classical Rademacher complexity argument and McDiarmid's inequality \citep[e.g.,][]{bartlett2002rademacher,wainwright2019high,bach2024learning} indeed yield a dimension-free uniform concentration bound such that for all $\delta\in(0,1]$, with probability at least $1-\delta$,
\begin{equation}\label{eq:classical}
    \sup_{\btheta\in\PS}\left|\Risk_{n}(\btheta)-\Risk(\btheta)\right|\le 2\sqrt{\frac{R^{2}\|\bSigma\|\er(\bSigma)}{n}}+\sqrt{\frac{8\left(1+R^{2}K^{2}\|\bSigma\|\er(\bSigma)\right)\log\delta^{-1}}{n}}
\end{equation}
under a boundedness assumption $\|\bX_{i}\|^{2}\le K^{2}\tr(\bSigma)$ almost surely for some $K>0$.
Note that $p$ does not appear on the right-hand side; therefore, this bound is dimension-free.
We observe that the effective rank here explains an intrinsic dimension of the problem better than $p$ itself.

However, the bound \eqref{eq:classical} is not tight when we consider the uniform law of large numbers \eqref{eq:ulln}.
The bound \eqref{eq:classical} combined with the Borel--Cantelli lemma leads to the uniform law $\eqref{eq:ulln}$ provided that $\er(\bSigma)\log n/n\to0$ holds and $K$, $R$, and $\|\bSigma\|$ are fixed in $n$.
This logarithmic dependence on $n$ should be redundant and improvable.
For example, we can consider a case where $\er(\bSigma)\asymp n/\log n$.
Since $\er(\bSigma)/n\to0$, we expect that the uniform law holds; however, $\er(\bSigma)\log n/n\asymp 1$ and the bound \eqref{eq:classical} does not yield the uniform law.

Our study gives a dimension-free uniform concentration bound yielding a milder sufficient condition for the uniform law of large numbers.
We derive a bound such that for some explicit $c>0$ independent of $\bSigma$, $n$, and $p$, for any $\delta\in(0,1]$, with probability at least $1-\delta$,
\begin{equation}
    \sup_{\btheta\in\PS}\left|\Risk_{n}\left(\btheta\right)-\Risk\left(\btheta\right)\right|\le c\left(\sqrt{\frac{\|\bSigma\|\er(\bSigma)+(1+\|\bSigma\|)(1+\log\delta^{-1})}{n}}+\frac{\|\bSigma\|\sqrt{1+\log\delta^{-1}}}{n}\right).
\end{equation}
It is noteworthy that this bound gives a milder and more natural sufficient condition $\er(\bSigma)/n\to0$ for the uniform law of large numbers than the classical bound \eqref{eq:classical}.

The proof of our result is based on the PAC-Bayes approach with second-order expansion; e.g., see \citet{catoni2007pac} and \citet{alquier2024user} for introductions to the PAC-Bayes approach.
The PAC-Bayes approach is applicable to dimension-free analysis for linear problems such as covariance estimation \citep[e.g.,][]{zhivotovskiy2024dimension} and linear regression with least square losses, where functions of interests and their posterior means can coincide.
We extend such approaches to nonlinear functions of interest \eqref{eq:risk:empirical} by considering the second-order expansion of risk functions.
It generalizes previous studies for linear problems via the PAC-Bayes approach, as second-order terms are independent of parameters or even disappear under linearity.

\subsection{Motivations}
The motivations of our studies are not only to unveil the behaviour of maximum likelihood estimation in high-dimensional logistic regression, as explained above, but also to address anisotropic input vectors, that is, $\{\bX_{i};i=1,\ldots,n\}$ with $\E[\bX_{i}\bX_{i}^{\top}]\neq \bfI_{p}$.
Although non-sparse high-dimensional logistic regression with isotropic $\bX_{i}$ has been widely studied \citep{sur2019modern,salehi2019impact,kuchelmeister2024finite,hsu2024sample}, the case of anisotropic $\bX_{i}$ has been examined by only a limited number of studies \citep{candes2020phase,emami2020generalization,zhao2022asymptotic}.
Hence, our understanding of anisotropic cases is still lacking.
Anisotropic inputs have the following motivations: (i) they are realistic in applications, and (ii) we can understand the roles of $p$ and intrinsic dimensions separately.

First, anisotropic input vectors are widely observed in applications, and thus, considering such inputs enables us to understand the properties of high-dimensional logistic regression in practical situations.
We here list two typical causes of anisotropic input vectors: essentially low-dimensional input vectors and dependent input vectors.
Many high-dimensional data are essentially embedded into low-dimensional spaces or manifolds in recent data science \citep{zhivotovskiy2024dimension}.
Besides, the components of input vectors are correlated in practice, and thus, the assumption of the isotropy of inputs seldom holds.
Hence, analysis without isotropy is important to be consistent with practice.

In the second place, anisotropic settings allow us to deepen the understanding of the roles of $p$ and intrinsic dimensions.
Under the isotropic situation, $p$ is not only the dimension of input vectors but also a natural intrinsic dimension of the problem.
Therefore, how $p$ is important compared to the intrinsic one remains unclear.
Our analysis implies that $\er(\bSigma)$ is a significant intrinsic dimension of the problem \eqref{eq:constrainedlr} even with anisotropic inputs.
Moreover, $p$ itself is irrelevant when we give the uniform concentration bounds for \eqref{eq:constrainedlr} and the uniform law \eqref{eq:ulln}.

\subsection{Literature review}
\subsubsection{High-dimensional generalized linear models}
Statistical inference for generalized linear models (GLMs) with large $p$ has been extensively studied.
There are four typical settings on $\btheta$: moderately high-dimensional $\btheta$ with small $p/n$ \citep{liang2012maximum,kuchelmeister2024finite,hsu2024sample}; sparse and high-dimensional $\btheta$ \citep{van2008high,james2009generalized,levy2023generalization}; non-sparse and proportionally high-dimensional $\btheta$, i.e., $p/n\to \kappa\in(0,\infty)$ \citep{sur2019modern,sur2019likelihood,salehi2019impact,aubin2020generalization,emami2020generalization,zhao2022asymptotic,sawaya2023statistical}; and non-sparse and possibly infinite-dimensional $\btheta$ \citep{wu2023finite}.
Note that our study adopts the last setting.

\citet{liang2012maximum} discuss the asymptotic properties of the maximum likelihood estimation of logistic regression with $p=o(n)$.
\citet{kuchelmeister2024finite} examine the non-asymptotic rates of convergence in the maximum likelihood estimation of constrained logistic regression under large and small signal-to-noise settings with small $p/n$.
\citet{hsu2024sample} study the non-asymptotic sample complexities of constrained logistic regression with several inverse temperature settings and small $p/n$.
\citet{van2008high} and \citet{james2009generalized} study LASSO and Dantzig-selector estimators for GLMs with sparsity respectively.
\citet{levy2023generalization} analyse the multinominal logistic regression problem under sparsity and show its generalization error.
\citet{sur2019modern} and \citet{sur2019likelihood} investigate logistic regression under proportionally high-dimensional settings via the approximate message passing theory, show the biased behaviours of the ordinary maximum likelihood estimation, and propose procedures for debiased estimation and hypothesis testing.
\citet{zhao2022asymptotic} extend the results of \citet{sur2019modern} and \citet{sur2019likelihood} to general covariance settings under proportional high-dimensionality.
\citet{candes2020phase} analyse the existence of the maximum likelihood estimator of proportionally high-dimensional logistic regression and its phase transition.
\citet{arsov2019stability} study stability and generalization errors for some models including logistic regression.
\citet{salehi2019impact} investigate the impact of some regularization schemes on logistic regression under proportionally high-dimensional settings via the convex Gaussian min-max theorem.
\citet{aubin2020generalization} consider some regularized empirical risk minimization problems including regularized logistic regression with proportionally high-dimensional settings and its Bayes optimality.
\citet{emami2020generalization} analyse the asymptotic generalization errors of GLMs under proportional high-dimensional limits solved by a version of the approximate message passing algorithm.
\citet{sawaya2023statistical} discuss the estimation of GLMs with general link functions with monotonicity and their asymmetric inverses.
\citet{wu2023finite} analyse GLMs with non-sparse and possibly infinite-dimensional $\btheta$ and the ReLU activation function as its link function trained by several algorithms.

\subsubsection{Concentration of infinite-dimensional statistical models}
Dimension-free bounds for non-sparse and possibly infinite-dimensional statistical models are a topical problem; note that most of the studies are based on linearity.
\citet{catoni2017dimension} consider the PAC-Bayes approach for estimation problems with possibly infinite-dimensional settings and heavy-tailed distributions and give several dimension-free bounds.
\citet{bartlett2020benign} show that the excess risk of the minimum $\ell^{2}$-norm interpolator for a linear regression model with $p>n$ gets small under the moderate decay of the eigenvalues of the covariance matrix of $\bX_{i}$'s.
\citet{tsigler2023benign} further study ridge regression with $p>n$ in detail and give tighter bounds for the excess risk than \citet{bartlett2020benign}.
\citet{cheng2022dimension} also investigate ridge regression and develop a unified non-asymptotic analysis applicable to both underparameterized ($p<n$) and overparameterized ($p>n$) schemes.

\subsection{Notation}
For any vector $\bv\in\R^{d}$, $\|\bv\|$ denotes the $\ell^{2}$-norm of $\bv$.
For any vectors $\bu,\bv\in\R^{d}$, $\langle \bu,\bv\rangle$ represents the dot product of $\bu$ and $\bv$.
For any matrix $\bfA\in\R^{d_{1}}\otimes\R^{d_{2}}$, $\|\bfA\|$ denotes the spectral norm of $\bfA$ and $\bfA^{\top}$ denotes the transpose of $\bfA$.
For any square matrix $\bfA\in\R^{d}\otimes\R^{d}$, $\tr(\bfA)$ is the trace of $\bfA$.
For any positive semi-definite matrix $\bfA\in\R^{d}\otimes\R^{d}$, $\er(\bfA)$ is the effective rank of $\bfA$ defined as
\begin{equation}
    \er(\bfA)=\begin{cases}
        \frac{\tr(\bfA)}{\|\bfA\|} & \text{ if }\|\bfA\|>0,\\
        0 & \text{ otherwise.}
    \end{cases}
\end{equation}

Let $B^{d}[r]:=\{\btheta\in\R^{d};\|\btheta\|\le r\}$ with $r\ge0$; we also use the notation $B[r]=B^{d}[r]$ if $d$ is obvious by the context.
We define $\bbS^{d-1}:=\{\bu\in\R^{d};\|\bu\|=1\}$ with $d\in\N$.

For any function $f:\R^{d_{1}}\to\R^{d_{2}}$, $f\in\calC^{r}(\R^{d_{1}};\R^{d_{2}})$ means that $f$ is $r$-times continuously differentiable, $f\in\calC_{b}^{r}(\R^{d_{1}};\R^{d_{2}})$ means that $f\in\calC^{r}(\R^{d_{1}};\R^{d_{2}})$ and $f$ as well as its derivatives of order up to $r$ are bounded, and $f\in\calC_{p}^{r}(\R^{d_{1}};\R^{d_{2}})$ means that $f\in\calC^{r}(\R^{d_{1}};\R^{d_{2}})$ and $f$ as well as its derivatives of order up to $r$ are of at most polynomial growth.
For any $f\in\calC^{1}(\R^{d};\R)$, $\nabla f:=(\partial_{1}f,\cdots,\partial_{d}f)$ is the gradient of $f$.
For any $f\in\calC^{2}(\R^{d};\R)$, $\Delta f:=\sum_{i=1}^{d}\partial_{i}^{2}f$ denotes the Laplacian of $f$.

For functions $f,g:\N\to[0,\infty)$, $f(n)=\calO(g(n))$ implies that for some $M>0$ and $n_{0}\in\N$, for all $n\ge n_{0}$, $f(n)/g(n)\le M$, $f(n)=o(g(n))$ implies that $f(n)/g(n)\to 0$,  $f(n)=\omega(g(n))$ implies that $f(n)/g(n)\to \infty$, and $f(n)\asymp g(n)$ implies that both $f(n)=\calO(g(n))$ and $g(n)=\calO(f(n))$ hold.

For any real-valued random variable $X$, $\|X\|_{\psi_{2}}:=\inf\{t>0;\E[\exp(X^{2}/t^{2})]\le 2\}$, which is the $2$-Orlicz norm of $X$.
For any $\R^{d}$-valued random variables $\bX$, $\|\bX\|_{\psi_{2}}:=\sup_{\bu\in\bbS^{d-1}}\|\langle \bX,\bu\rangle\|_{\psi_{2}}$
For two probability measures $\rho,\mu$ defined on an identical measurable space such that $\rho$ is absolutely continuous with respect to $\mu$, $\RE(\rho\|\mu)$ denotes the Kullback--Leibler divergence of $\rho$ from $\mu$.
For two random variables $X,Y$ defined on an identical measurable space such that the law of $X$ is absolutely continuous with respect to that of $Y$, $\RE(X\|Y)$ denotes the Kullback--Leibler divergence of the law of $X$ from that of $Y$.

\subsection{Paper outline}
The paper is organized as follows: Section \ref{sec:main} shows the main theoretical studies, Section \ref{sec:discussion} gives detailed discussions of the main results, Section \ref{sec:experiments} displays the results of our numerical experiments, Section \ref{sec:proof} provides the proof of our main theorem, and the proofs of preliminary results are postponed to Appendix.

\section{Main results} \label{sec:main}
We begin with preparatory definition and assumption in Section \ref{sec:assumption} and give our main theorem, a dimension-free uniform concentration bound, in Section \ref{sec:theorem} and its corollary, a uniform law of large numbers, in Section \ref{sec:corollary}.
Section \ref{sec:sketch} presents an outline of the proof of the main theorem.

\subsection{Assumption}\label{sec:assumption}

We display the definition of the concentration property of random vectors and our assumption on $\{(\bX_{i},Y_{i})\}$ before presenting the main results of this study.

\begin{definition}
    For any $d\in\N$, an $\R^{d}$-valued random variable $\bzeta$ satisfies the \textit{concentration property with constant }$K>0$ if for every $1$-Lipschitz function $\varphi:\R^{d}\to\R$, it holds that $\E[|\varphi(\bzeta)|]<\infty$ and for all $u>0$,
        \begin{equation}
            \Pr\left(\left|\varphi(\bzeta)-\E[\varphi(\bzeta)]\right|\ge u\right)\le 2\exp\left(-u^{2}/K^{2}\right).
        \end{equation}
\end{definition}
The definition of the concentration property is adopted from \citet{adamczak2015note}; \citet{ledoux2001concentration} gives a comprehensive overview of this type of concentration.
A well-known sufficient condition for the concentration property is that the law of $\bZ$ satisfies a logarithmic Sobolev inequality \citep[see Section 5.4.2 of][]{bakry2014analysis}.
For example, the $d$-dimensional standard Gaussian random vector satisfies the concentration property with constant $\sqrt{2}$ for arbitrary $d\in\N$ \citep[see also Section 5.4 of][]{boucheron2013concentration}.

\begin{assumption}\label{assumption:1}
    $\{(\bX_{i},Y_{i});i=1,\ldots,n\}$ is a sequence of $\R^{p}\times\{0,1\}$-valued i.i.d.~random variables with $\bSigma:=\E[\bX_{i}\bX_{i}^{\top}]$. For each $i=1,\ldots,n$, $\bX_{i}$ has the representation $\bX_{i}=\bfU\bLambda^{1/2}\bZ_{i}$, where $\bfU$ is a $p\times p $ orthogonal matrix, $\bLambda$ is a $p\times p $ positive semi-definite diagonal matrix, and $\{\bZ_{i}\}$ is a sequence of $p$-dimensional i.i.d.~random vectors with the following properties:
    \begin{itemize}
        \item[(i)] $\bSigma=\bfU\bLambda \bfU^{\top}$.
        \item[(ii)] $\bZ_{i}$ is isotropic, that is, $\E[\bZ_{i}\bZ_{i}^{\top}]=\bfI_{p}$.
        \item[(iii)] The $\R^{np}$-valued random variable $\bZ:=(\bZ_{1},\cdots,\bZ_{n})$ satisfies the concentration property with constant $K>0$.
    \end{itemize}
\end{assumption}
Note that we do not suppose $Y_{i}|\bX_{i}=\bx_{i}\sim\Bernoulli(\sigma(\langle \bx_{i},\btheta^{\ast}\rangle))$ for some $\btheta^{\ast}\in\R^{p}$ and thus we consider a possibly misspecified logistic regression problem. 

\begin{remark}
    If Assumption \ref{assumption:1} holds and $\E[\bZ_{i}]=0$, then $\|\bZ_{i}\|_{\psi_{2}}\le \sqrt{3}K$, which can be checked as follows: for every $\bu\in\bbS^{p-1}$,
    \begin{equation*}
        \E\left[\exp\left(\langle\bZ_{i},\bu\rangle^{2}/(3K^{2})\right)\right]=1+\int_{1}^{\infty}\Pr\left(|\langle\bZ_{i},\bu\rangle|\ge \sqrt{3K^{2}\log t}\right)\diff t\le 1+2\int_{1}^{\infty}t^{-3}\diff t= 2.
    \end{equation*}
    Otherwise, $\|\bZ_{i}\|_{\psi_{2}}\le\|\bZ_{i}-\E[\bZ_{i}]\|_{\psi_{2}}+\|\E[\bZ_{i}]\|_{\psi_{2}}\le\sqrt{3}K+\sup_{\bu\in\bbS^{p-1}}|\E[\langle \bZ_{i},\bu\rangle]|\le\sqrt{3}K+\|\E[\bZ_{i}\bZ_{i}^{\top}]\|^{1/2}\le 1+\sqrt{3}K$.
\end{remark}

\subsection{Main theorem}\label{sec:theorem}
We show a dimension-free uniform concentration inequality for the empirical risk function of logistic regression, which is the main theorem of this study.
\begin{theorem}\label{thm:ucb}
    Suppose that Assumption \ref{assumption:1} holds.
    For arbitrary $\delta\in(0,1/6]$, $n\in\N$, and $R\ge 0$,
    with probability at least $1-6\delta$,
    \begin{align}\label{eq:main}
        \sup_{\btheta\in\PS}\left|\Risk_{n}(\btheta)-\Risk(\btheta)\right|
        &\le \sqrt{\frac{27(\log\delta^{-1}+(1+\sqrt{3}K)^{2}(\tr(\bSigma)/12+\|\bSigma\|R^{2}\log\delta^{-1}))(1+6R^{2})}{n}}\notag\\
        &\quad+2R\sqrt{\frac{\tr(\bSigma)}{n}}+\sqrt{\frac{78 K^{2}\left(256+R^{2}\tr\left(\bSigma\right)\right)}{n}}+\frac{9K^{2}R\|\bSigma\|\sqrt{\log\delta^{-1}}}{n}.
    \end{align}
\end{theorem}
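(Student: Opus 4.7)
The plan is to combine a PAC-Bayes inequality with Gaussian prior and posterior, a second-order Taylor expansion of the logistic loss, and a Rademacher-complexity bound for a residual first-order piece. Fix $\tau>0$ and set the prior $\mu=\Gaussian(\zero,\tau^{2}\bfI_{p})$ and, for each $\btheta\in\PS$, the posterior $\rho_{\btheta}=\Gaussian(\btheta,\tau^{2}\bfI_{p})$, so that $\RE(\rho_{\btheta}\|\mu)=\|\btheta\|^{2}/(2\tau^{2})\le R^{2}/(2\tau^{2})$. The starting decomposition is
\begin{equation*}
\Risk_{n}(\btheta)-\Risk(\btheta)=\Bigl[\Risk_{n}(\btheta)-\int\Risk_{n}\diff\rho_{\btheta}\Bigr]+\int(\Risk_{n}-\Risk)\diff\rho_{\btheta}+\Bigl[\int\Risk\diff\rho_{\btheta}-\Risk(\btheta)\Bigr],
\end{equation*}
in which the first and third summands are smoothing perturbations controlled by Taylor expansion and the middle summand is a PAC-Bayes-averaged deviation.

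For the smoothing perturbations, the convexity of $\Risk_{n}$ together with the Hessian estimate $\nabla^{2}\Risk_{n}(\btheta')=\tfrac{1}{n}\sum_{i}\sigma'(\langle\bX_{i},\btheta'\rangle)\bX_{i}\bX_{i}^{\top}\preceq\tfrac{1}{4n}\sum_{i}\bX_{i}\bX_{i}^{\top}$ and the fact that $\rho_{\btheta}$ has mean $\btheta$ (so the first-order Taylor term vanishes in expectation) yield
\begin{equation*}
0\le\int\Risk_{n}\diff\rho_{\btheta}-\Risk_{n}(\btheta)\le \frac{\tau^{2}}{8n}\sum_{i=1}^{n}\|\bX_{i}\|^{2},
\end{equation*}
whose population counterpart is bounded by $\tau^{2}\tr(\bSigma)/8$. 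To control $\tfrac{1}{n}\sum_{i}\|\bX_{i}\|^{2}$ in high probability, I would apply Assumption \ref{assumption:1}(iii) to the Lipschitz map $\bz\mapsto(\tfrac{1}{n}\sum_{i}\|\bfU\bLambda^{1/2}\bz_{i}\|^{2})^{1/2}$, which has Lipschitz constant $\|\bSigma\|^{1/2}/\sqrt{n}$, producing an upper bound in terms of $\tr(\bSigma)$ plus a $K^{2}\|\bSigma\|\log\delta^{-1}$ correction.

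For the PAC-Bayes-averaged deviation, a Catoni-type inequality gives, with probability at least $1-\delta$, for all $\btheta\in\PS$ and any $\lambda>0$,
\begin{equation*}
\int(\Risk_{n}-\Risk)\diff\rho_{\btheta}\le \tfrac{1}{\lambda}\bigl(\RE(\rho_{\btheta}\|\mu)+\log\delta^{-1}\bigr)+\tfrac{\lambda}{2n}\mathsf{V}_{\mu},
\end{equation*}
where $\mathsf{V}_{\mu}$ is a variance proxy for $\ell(\btheta';\bX,Y)$ under $\btheta'\sim\mu$. Because $|\ell(\btheta';\bX,Y)|\le \log 2+|\langle\bX,\btheta'\rangle|$ and $\btheta'$ is sub-Gaussian with covariance $\tau^{2}\bfI_{p}$, $\mathsf{V}_{\mu}$ reduces to a quantity of order $1+\tau^{2}\tr(\bSigma)$ with logarithmic corrections of order $\tau^{2}\|\bSigma\|\log\delta^{-1}$. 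Separately, a standard contraction and Rademacher-complexity bound, applied to the $1$-Lipschitz logistic loss on the linear class $\{\btheta\mapsto\langle\bx,\btheta\rangle:\btheta\in\PS\}$, provides the $2R\sqrt{\tr(\bSigma)/n}$ summand that handles the residual first-order contribution surviving the PAC-Bayes averaging.

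Assembling these pieces via a union bound over the six failure events and optimizing $\tau$ to balance $R^{2}/(\tau^{2}n)$ against $\tau^{2}\tr(\bSigma)$ produces the stated inequality. The main obstacle is to keep the bound dimension-free throughout: the Hessian estimate for the Taylor step must use the scalar bound $\sigma'\le 1/4$ rather than any spectral estimate that would introduce $p$, and the variance proxy $\mathsf{V}_{\mu}$ must be controlled through the effective rank of $\bSigma$ rather than $p$; both hinge on Assumption \ref{assumption:1}(iii) through the sub-Gaussian concentration of $\bZ$.
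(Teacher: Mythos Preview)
There is a genuine gap in your optimization of $\tau$. By bounding the two smoothing perturbations \emph{separately} through the crude Hessian estimate $\sigma'\le 1/4$, you obtain a contribution of size $\tau^{2}\tr(\bSigma)/8$ (plus its empirical analogue). This term carries no $1/\sqrt{n}$ decay, so to make it match the target $\sqrt{\tr(\bSigma)/n}$ you are forced to take $\tau^{2}\asymp 1/\sqrt{n\tr(\bSigma)}$. But then the KL term $R^{2}/(2\tau^{2})\asymp R\sqrt{n\tr(\bSigma)}$ explodes, and the PAC-Bayes piece becomes of order $(\tr(\bSigma)/n)^{1/4}$, strictly worse than the claimed bound. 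No choice of $\tau$ in your scheme recovers the $\sqrt{\tr(\bSigma)/n}$ rate: the best trade-off between $\tau^{2}\tr(\bSigma)$ and the PAC-Bayes term is always polynomially slower.

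The paper avoids this by \emph{not} bounding the two smoothing perturbations separately. Their sum is exactly (via It\^{o}'s formula, or equivalently the exact integral remainder in Taylor)
\[
\Bigl[\Risk_{n}(\btheta)-\textstyle\int\Risk_{n}\diff\rho_{\btheta}\Bigr]+\Bigl[\textstyle\int\Risk\diff\rho_{\btheta}-\Risk(\btheta)\Bigr]
=-\frac{1}{2}\int_{0}^{t}\E_{\BM}\bigl[\Delta\Risk_{n}(\BM_{s}^{\btheta})-\Delta\Risk(\BM_{s}^{\btheta})\bigr]\diff s,
\]
a \emph{centered} quantity (empirical minus population Laplacian, and independent of the labels $Y_{i}$). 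The paper then controls $\sup_{\btheta}$ of this centered residual by a Rademacher-complexity bound on its expectation (this is where the $2R\sqrt{\tr(\bSigma)/n}$ term actually comes from) together with a Bernstein-type concentration of the supremum around its mean (producing the $\sqrt{78K^{2}(256+R^{2}\tr(\bSigma))/n}$ and $9K^{2}R\|\bSigma\|\sqrt{\log\delta^{-1}}/n$ terms). Because this residual is already $O(\sqrt{\tr(\bSigma)/n})$ uniformly in $t\in(0,1]$, the paper is free to choose $t=\tau^{2}=1/(12\log\delta^{-1})$ independently of $n$, which keeps the KL term at $O(\log\delta^{-1})$ and yields the stated PAC-Bayes piece. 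Your separate use of a Rademacher bound on the logistic loss itself (``the residual first-order contribution'') is misplaced: the first-order term vanishes exactly, as you noted, and applying Rademacher complexity directly to the loss would reproduce the suboptimal classical bound rather than explain the $2R\sqrt{\tr(\bSigma)/n}$ summand.
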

The proof is given in Section \ref{sec:mainproof}; we give a sketch of the proof in Section \ref{sec:sketch}.
A detailed discussion is postponed to Section \ref{sec:discussion}.

The characteristics of our bounds are its dimension-free property and explicit representation of numerical constants.
The bound given by Theorem \ref{thm:ucb} is only dependent on $\delta,K,\tr(\bSigma),\|\bSigma\|$, and $R$, and thus dimension-free.
Hence, we can extend our result for $\R^{p}$ to general separable Hilbert spaces by the dimension-free property and the monotonicity of probability measures \citep[for example, see][]{giulini2018robust}.
Note that every numerical constant is explicit; it is sometimes not easy to obtain such explicit constants in dimension-free analysis \citep[e.g., ][]{koltchinskii2017concentration}.
Explicit representation enables us to give uncertainty quantification of the population risk at a solution of the constrained minimization problem \eqref{eq:constrainedlr} provided that $K$ is known and deterministic or high-probability upper bounds for $\|\bSigma\|$ and $\tr(\bSigma)$ are available.

\subsection{Uniform law of large numbers}\label{sec:corollary}
We consider a sufficient condition for the uniform law of large numbers \eqref{eq:ulln} via Theorem \ref{thm:ucb}.
To deal with $p$ and $\bSigma$ possibly dependent on $n$, we define a double array of random variables satisfying Assumption \ref{assumption:1} for each $n$.
\begin{assumption}\label{assumption:2}
    For each $n\in\N$, $p_{n}\in\N$ and $\{(\bX_{i,n},Y_{i,n});i=1,\ldots,n\}$ is a sequence of $\R^{p_{n}}\times\{0,1\}$-valued i.i.d.~random variables with $\bSigma_{n}:=\E[\bX_{1,n}\bX_{1,n}^{\top}]$.
    For some $K>0$, $\bX_{i,n}=\bfU_{n}\bLambda_{n}^{1/2}\bZ_{i,n}$ holds for all $i=1,\ldots,n$ and $n\in\N$, where $\bfU_{n}$ is a $p_{n}\times p_{n}$ orthogonal matrix, $\bLambda_{n}$ is a $p_{n}\times p_{n}$ positive semi-definite diagonal matrix, and $\{\bZ_{i,n};i=1,\ldots,n\}$ is a sequence of $p_{n}$-dimensional i.i.d.~random vectors with the following properties: 
    (i) $\bSigma_{n}=\bfU_{n}\bLambda_{n}\bfU_{n}^{\top}$; 
    (ii) $\E[\bZ_{i,n}\bZ_{i,n}^{\top}]=\bfI_{p_{n}}$; 
    (iii) $\bZ_{n}:=(\bZ_{1,n},\cdots,\bZ_{n,n})$ satisfies the concentration property with constant $K$.
\end{assumption}

We define the empirical risk function under Assumption \ref{assumption:2} using $\{(\bX_{i,n},Y_{i,n})\}$ instead of $\{(\bX_{i},Y_{i})\}$ for each $n$ as follows: for each $\btheta\in\R^{p_{n}}$,
\begin{equation}\label{eq:risk:empirical:n}
    \Risk_{n}(\btheta):=\frac{1}{n}\sum_{i=1}^{n}\left(-Y_{i,n}\log\sigma(\langle \bX_{i,n},\btheta\rangle)-(1-Y_{i,n})\log(1-\sigma(\langle \bX_{i,n},\btheta\rangle)\right).
\end{equation}
The following corollary yields a sufficient condition of the uniform law of large numbers; it holds by Theorem \ref{thm:ucb} with letting $\delta=1/n^{2}$ and the Borel--Cantelli Lemma.
\begin{corollary}[Uniform law of large numbers]\label{cor:ulln}
    Suppose that Assumption \ref{assumption:2} holds and $\sup_{n\in\N}\|\bSigma_{n}\|<\infty$.
    Fix an arbitrary $R\ge 0$.
    If $\lim_{n\to\infty}\er(\bSigma_{n})/n=0$, then
    \begin{equation}
        \lim_{n\to\infty}\sup_{\btheta\in\PSULLN}\left|\Risk_{n}(\btheta)-\E[\Risk_{n}(\btheta)]\right|=0\text{ almost surely.}
    \end{equation}
\end{corollary}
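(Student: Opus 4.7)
The plan is to apply Theorem \ref{thm:ucb} at each sample size $n$ with confidence parameter $\delta_{n}=1/n^{2}$, show that the resulting deterministic upper bound tends to zero, and invoke the Borel--Cantelli lemma to upgrade the high-probability bound to almost-sure convergence.

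First I would fix $n$ large enough that $\delta_{n}\le 1/6$ (so $n\ge 3$ suffices) and invoke Theorem \ref{thm:ucb} on the $n$-th row of the triangular array, which satisfies Assumption \ref{assumption:1} by virtue of Assumption \ref{assumption:2}. With $\log\delta_{n}^{-1}=2\log n$, the theorem gives an event $E_{n}$ with $\Pr(E_{n})\ge 1-6/n^{2}$ on which
\begin{equation*}
    \sup_{\btheta\in\PSULLN}\left|\Risk_{n}(\btheta)-\E[\Risk_{n}(\btheta)]\right|\le U_{n},
\end{equation*}
where $U_{n}$ denotes the right-hand side of \eqref{eq:main} with $\bSigma=\bSigma_{n}$ and $\delta=\delta_{n}$.

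Next I would verify $U_{n}\to 0$ deterministically. The key identity is $\tr(\bSigma_{n})=\|\bSigma_{n}\|\er(\bSigma_{n})$; combined with $M:=\sup_{n}\|\bSigma_{n}\|<\infty$ and $\er(\bSigma_{n})=o(n)$, this yields $\tr(\bSigma_{n})/n\to 0$. Term by term, the first radical in $U_{n}$ is of order $\sqrt{(\log n+\er(\bSigma_{n}))/n}$, the second is $O(\sqrt{\er(\bSigma_{n})/n})$, the third is $O(\sqrt{(1+\er(\bSigma_{n}))/n})$, and the last is $O(\sqrt{\log n}/n)$; all four vanish as $n\to\infty$.

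Finally, since $\sum_{n\ge 1}\Pr(E_{n}^{c})\le 6\sum_{n\ge 1}n^{-2}<\infty$, the Borel--Cantelli lemma gives $\Pr(E_{n}^{c}\text{ i.o.})=0$, so almost surely the supremum is bounded by $U_{n}$ for every sufficiently large $n$, and the conclusion follows by letting $n\to\infty$. There is no real obstacle here---Theorem \ref{thm:ucb} does all of the work. The only subtle point is that the $\|\bSigma_{n}\|R^{2}\log\delta_{n}^{-1}$ contribution in the main radical is why the hypothesis $\sup_{n}\|\bSigma_{n}\|<\infty$ is required in addition to $\er(\bSigma_{n})/n\to 0$: without the uniform spectral-norm bound, the interaction of $\log n$ with $\|\bSigma_{n}\|$ could not be controlled.
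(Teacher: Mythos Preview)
Your proposal is correct and follows exactly the route the paper takes: apply Theorem~\ref{thm:ucb} to the $n$-th row with $\delta=1/n^{2}$ and use the Borel--Cantelli lemma. The paper records no more than this one-line justification, so your term-by-term verification that $U_{n}\to 0$ and your remark on why $\sup_{n}\|\bSigma_{n}\|<\infty$ is needed only add detail to an argument the authors leave implicit.
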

We can also know that the minimum empirical risk converges to the minimum population risk almost surely if $\er(\bSigma_{n})/n\to0$ by Corollary \ref{cor:ulln}.
This is milder than reasonable sufficient conditions derived by classical arguments; we discuss it in Section \ref{sec:discussion}.

\subsection{Idea of the proof}\label{sec:sketch}
We exhibit the idea of the proof of Theorem \ref{thm:ucb} briefly.
We first use the PAC-Bayes approach by adopting $\{\BM_{t}^{\btheta};t\ge0\}$ with $\btheta\in\PS$ , $p$-dimensional Brownian motions starting at $\btheta$, as the prior and posteriors and see that for any $t\in(0,1]$, with high probability, for all $\btheta\in\PS$,
\begin{align}
    \E_{\BM}\left[\Risk\left(\BM_{t}^{\btheta}\right)\right]-\E_{\BM}\left[\Risk_{n}\left(\BM_{t}^{\btheta}\right)\right]\le \text{(PAC-Bayes complexity bound for $\Risk_{n}$)}.
\end{align}
In the second place, we use It\^{o}'s formula and obtain that for any $t\in(0,1]$, with high probability, for all $\btheta\in\PS$,
\begin{align}
    \Risk\left(\btheta\right)-\Risk_{n}\left(\btheta\right)&\le \text{(PAC-Bayes complexity bound for $\Risk_{n}$)}\notag\\
    &\quad+\frac{1}{2}\int_{0}^{t}\left(\E_{\BM}\left[\Delta\Risk_{n}\left(\BM_{s}^{\btheta}\right)\right]-\E_{\BM}\left[\Delta\Risk\left(\BM_{s}^{\btheta}\right)\right]\right)\diff s.
\end{align}
Note that the Laplacian of $\Risk_{n}$ is no more dependent on $\{Y_{i}\}$ and we can reduce the problem of $\{(\bX_{i},Y_{i})\}$ to that of $\{\bX_{i}\}$ or equivalently $\{\bZ_{i}\}$, whose behaviours are tractable by Assumption \ref{assumption:1}.
We thirdly give a Bernstein-type inequality of the supremum of the second term on the right-hand side in $\btheta$ around its expectation in $\bX$ and derive a bound such that for any $t\in(0,1]$, with high probability, for all $\btheta\in\PS$,
\begin{align}
    \Risk\left(\btheta\right)-\Risk_{n}\left(\btheta\right)&\le \text{(PAC-Bayes complexity bound for $\Risk_{n}$)}\notag \\
    &\quad+\frac{1}{2}\E_{\bX}\left[\sup_{\btheta\in\PS}\int_{0}^{t}\left(\E_{\BM}\left[\Delta\Risk_{n}\left(\BM_{s}^{\btheta}\right)\right]-\E_{\BM}\left[\Delta\Risk\left(\BM_{s}^{\btheta}\right)\right]\right)\diff s\right]+\text{(residual)}.
\end{align}
In the last place, we evaluate a Rademacher-complexity-based bound for the second term on the right-hand side.
Since the sign on the left-hand side does not affect the discussion, we complete the proof by selecting appropriate $t\in(0,1]$.

\begin{remark}
    When we consider linear problems such as covariance estimation \citep[e.g.,][]{zhivotovskiy2024dimension} and linear regression with least square losses, Laplacians are independent of parameters or even equal to zero.
    We thus can regard our approach as a natural extension of the PAC-Bayes approach for linear problems to nonlinear ones.
\end{remark}

\begin{remark}
    It is noteworthy that PAC-Bayes bounds with second-order expansion themselves appear in previous studies \citep[e.g.,][]{wang2018identifying,tsuzuku2020normalized}, which typically employ Taylor's theorem.
    In our case, we can yield similar conclusions via Taylor's theorem instead of It\^{o}'s formula to expand the expectation with respect to Gaussian vectors; however, It\^{o}'s formula gives simpler proofs than Taylor's theorem.
    One of the reasons is that we repeatedly employ Stein's lemma and the Hermite polynomials in our proofs.
    It\^{o}'s formula leads to the remainder with the integrand of the tractable form $\sigma(1-\sigma)H_{0}$, where $H_{d}$ with $d\in\N_{0}$ is the $d$-th Hermite polynomial.
    On the other hand, Taylor's theorem gives the remainder with the integrand of the form $\sigma(1-\sigma)(H_{2}+H_{0})$ resulting in lengthy proofs.
\end{remark}

\section{Discussions}\label{sec:discussion}
We compare our result with the classical argument by Rademacher complexities and McDiarmid's inequality and its extension to unbounded cases from the viewpoint of the uniform law of large numbers.
Dealing with Gaussian $\bX_{i}$ as previous studies, one may consider an extension of the classical argument by the concentration of anisotropic subgaussian random vectors other than our approach; hence, we here give such an extension for comparison.
In this section, we suppose that Assumption \ref{assumption:2} holds.
Furthermore, assume that each $\bZ_{i,n}$ is centred and its components are independent.

A uniform concentration bound \eqref{eq:classical} is of order $\calO(\sqrt{\er(\bSigma_{n})\log \delta^{-1}/n})$, and an extension of the classical argument to subgaussian cases also gives the same order.
Theorem 6.3.2 of \citet{vershynin2018high} gives that there exists an absolute constant $a>0$ such that for any $t\ge 0$,
\begin{equation}
    \Pr\left(\max_{i=1,\ldots,n}\left|\|\bfU_{n}\bLambda_{n}^{1/2}\bZ_{i,n}\|-\tr(\bSigma_{n})^{1/2}\right|\ge a\sqrt{K^{4}\|\bSigma_{n}\|^{2}(t+\log n)}\right)\le 2\exp(-t).
\end{equation}
Since $|-y\log\sigma(\langle x,\btheta\rangle)-(1-y)\log(1-\sigma(\langle x,\btheta\rangle))|^{2}\le 2(1+R^{2}\|x\|^{2})$ for all $y\in\{0,1\}$, $x\in\R^{p_{n}}$, and $\btheta\in\PSULLN$, with probability at least $1-2\delta$, it holds that $\max_{i=1,\ldots,n}\sup_{\btheta\in\PSULLN}|-Y_{i}\log\sigma(\langle \bX_{i},\btheta\rangle)-(1-Y_{i})\log(1-\sigma(\langle \bX_{i},\btheta\rangle))|^{2}\le 2(1+2R^{2}(\tr(\bSigma_{n})+a^{2}K^{4}\|\bSigma_{n}\|^{2}(\log n+\log\delta^{-1})))$.
Therefore, the classical argument \citep[see Section 4.5 of][]{bach2024learning} gives that for arbitrary $\delta>0$, with probability $1-3\delta$,
\begin{align}\label{eq:extendedClassical}
    &\sup_{\btheta\in\PSULLN}\left|\Risk_{n}(\btheta)-\E[\Risk_{n}(\btheta)]\right|\notag\\
    &\le 2\sqrt{\frac{R^{2}\|\bSigma_{n}\|\er(\bSigma_{n})}{n}}+\sqrt{\frac{8(1+2R^{2}\|\bSigma_{n}\|(\er(\bSigma_{n})+a^{2}K^{4}\|\bSigma_{n}\|(\log n+\log\delta^{-1})))\log\delta^{-1}}{n}}.
\end{align}
The right-hand side above is of order $\calO(\sqrt{\er(\bSigma_{n})\log \delta^{-1}/n})$, which is the same as the estimate \eqref{eq:classical}.

Corollary \ref{cor:ulln} shows that a sufficient condition for the uniform law of large numbers \eqref{eq:ulln} is $\er(\bSigma_{n})/n\to0$, which is milder than ones based on the estimates \eqref{eq:classical} and \eqref{eq:extendedClassical}.
Both the estimates \eqref{eq:classical} and \eqref{eq:extendedClassical} combined with the Borel--Cantelli lemma and $\delta=1/n^{2}$ give a sufficient condition of the uniform law of large numbers $\er(\bSigma_{n})\log n/n\to0$, which is worse than $\er(\bSigma_{n})/n\to0$.

\section{Numerical experiments}\label{sec:experiments}
We show numerical experiments to examine how the minimizer of the problem \eqref{eq:constrainedlr} performs under large $p/n$ but small $\er(\bSigma)/n$.
In particular, we analyse the performance in prediction and sign recovery.

Let us show the settings of the numerical experiment.
We set $p=3000$ and $n=1000$, and thus $p/n=3$.
We examine two settings of $\bSigma$: $\bSigma=\bSigma_{p}^{\text{rec}}:=\diag\{1,1/2,\ldots,1/p\}$; and $\bSigma=\bfI_{p}$ as a baseline.
Note that the spectral norms of these $\bSigma$'s are identical ($\|\bSigma_{p}^{\text{rec}}\|=\|\bfI_{p}\|=1$); on the other hand, their effective ranks differ greatly ($\er(\bSigma_{p}^{\text{rec}})=\sum_{i=1}^{p}i^{-1}\approx8.5838$ and $\er(\bfI_{p})=p=3\times10^{3}$).
We generate $\bX_{i}\sim^{\iid}\Gaussian(\zero,\bSigma)$, $\btheta\sim\text{Unif}(\mathbb{S}^{p-1})$ (the uniform distribution on $\bbS^{p-1}$), and $Y_{i}\sim \Bernoulli(\sigma(\beta\langle \bX_{i},\btheta\rangle))$ for all $i=1,\ldots,n$, where the inverse temperature $\beta$ \citep{hsu2024sample} is set to be $10^{3}$.
Let us set $R=1$; it gives a convex relaxation of the minimization on $\bbS^{p-1}$ of \citet{hsu2024sample} (in fact, all the solutions in our numerical experiments lie on $\bbS^{p-1}$).
We also generate $\{(\bX_{i}^{\text{test}},Y_{i}^{\text{test}});i=1,\ldots,1000\}$, sequences of $1000$ independent copies of $(\bX_{1},Y_{1})$, as test data.
The total iteration number in the experiments is set to 100 for both the settings of $\bSigma$.

We evaluate $\hat{\btheta}$, the minimizer of the problem \eqref{eq:constrainedlr} (or equivalently the constrained maximum likelihood estimator), by the performance in prediction and sign recovery rather than risk functions.
We analyse these two qualitative problems since they make comparing $\hat{\btheta}$ under different $\bSigma$ straightforward.
We measure the performance of $\hat{\btheta}$ by prediction precision, that is, the ratios of training data $\{(\bX_{i},Y_{i})\}$ and test data $\{(\bX_{i}^{\text{test}},Y_{i}^{\text{test}})\}$ with $Y_{i}=\bbI(\langle \bX_{i},\hat{\btheta}\rangle\ge 0)$ and $Y_{i}^{\text{test}}=\bbI(\langle \bX_{i}^{\text{test}},\hat{\btheta}\rangle\ge 0)$.
This is more interpretable than the values of risk functions themselves.
We also evaluate $\hat{\btheta}$ by seeing how it can recover the sign of the true values of $\btheta$.
This is an important task to understand how each component of $\bX_{i}$ affects $Y_{i}$ and also easier to interpret than risk functions.

\subsection{Prediction performance}
Table \ref{tab:experiment:prediction} summarizes the prediction performance of $\hat{\btheta}$.
Comparing the cases with $\bSigma=\bSigma_{p}^{\text{rec}}$ and $\bSigma=\bfI_{p}$, we observe that $\hat{\btheta}$ with $\bSigma=\bSigma_{p}^{\text{rec}}$ performs well in the prediction of test data, and the gap of the performance between the training phase and test phase is quite small.
The experiment indicates that the empirical risk on $\PS$ \eqref{eq:constrainedlr} is a good approximation of the population risk minimization on $\PS$ given $\er(\bSigma)/n\ll1$.

Note that $\hat{\btheta}$ with $\bSigma=\bfI_{p}$ overfits training data but $\hat{\btheta}$ with $\bSigma=\bSigma_{p}^{\text{rec}}$ does not.
Whilst data are linearly separable with high probability \citep{cover1964geometrical,sur2019modern,candes2020phase} regardless of the setting of $\bSigma$, the empirical risk minimization with $\bSigma=\bfI_{p}$ seeks for $\btheta$ achieving linear separation; since the scale of each element of $\bX_{i}$ is uniform, it is a reasonable result.
On the other hand, although there exist linearly separating $\btheta$ with $\bSigma=\bSigma_{p}^{\text{rec}}$ with high probability, such $\btheta$ do not necessarily achieve small $\Risk_{n}(\btheta)$ since the scales of the elements of $\bX_{i}$ are not uniform.
In the following sign recovery experiment, we can observe that $\hat{\btheta}$ prioritizes learning the important elements of $\btheta$ over perfectly fitting training data.

\subsection{Sign recovery performance}
Table \ref{tab:experiment:sign} describes the sign recovery performance of $\hat{\btheta}$.
We show the sign recovery performance for the first 10/100/500 elements and all the elements.
Note that $\sum_{i=1}^{500}i^{-1}/\sum_{j=1}^{p}j^{-1}\approx0.79136$ and thus the variances of the first 500 elements of $\bX_{i}$ occupies almost 80\% of the sum of all the variances if $\bSigma=\bSigma_{p}^{\text{rec}}$.
We also examine the mean weighted by the variances of the components of $\bX_{i}$; this is motivated by the non-uniformity of the importance of $\btheta$.
We observe that $\hat{\btheta}$ with $\bSigma=\bSigma_{p}^{\text{rec}}$ performs well in the sign recovery tasks except for the signs recovery for all elements.
If $\bSigma=\bSigma_{p}^{\text{rec}}$, then it should be difficult to recover the signs of $\btheta$ corresponding to the elements of $\bX_{i}$ with small variances; hence, this result is reasonable.
More notably, $\hat{\btheta}$ successfully recovers the signs of the important elements of $\btheta$ when $\bSigma=\bSigma_{p}^{\text{rec}}$.

\begin{table}[ht]
    \centering
    \begin{tabular}{l||l|c}\hline
         Measurements & $\bSigma=\bSigma_{p}^{\text{rec}}$ & $\bSigma=\bfI_{p}$ \\\hline\hline
         Correct prediction in training & .74707 & \textbf{1.0000}\\
         Correct prediction in test & \textbf{.71256} & .64489\\\hline
         Mean absolute differences & \textbf{.03619} & .35511\\
         \hline
    \end{tabular}
    \caption{The predictive performance of the minimizer.
    The numbers are the means taken over all 100 experiments.}
    \label{tab:experiment:prediction}
\end{table}

\begin{table}[ht]
    \centering
    \begin{tabular}{l|l||l|l}\hline
         Mean sign recovery & Definition & $\bSigma=\bSigma_{p}^{\text{rec}}$ & $\bSigma=\bfI_{p}$ \\\hline\hline
         First 10 elements &$\sum_{i=1}^{10}\bbI(\sgn(\btheta_{i}^{\ast})=\sgn(\hat{\btheta}_{i}))/10$ & \textbf{.93000} & .63400\\
         First 100 elements &$\sum_{i=1}^{100}\bbI(\sgn(\btheta_{i}^{\ast})=\sgn(\hat{\btheta}_{i}))/100$ & \textbf{.81250} & .64630\\
         First 500 elements &$\sum_{i=1}^{500}\bbI(\sgn(\btheta_{i}^{\ast})=\sgn(\hat{\btheta}_{i}))/500$ & \textbf{.69890} & .64604\\
         All the elements &$\sum_{i=1}^{p}\bbI(\sgn(\btheta_{i}^{\ast})=\sgn(\hat{\btheta}_{i}))/p$& .59644 & \textbf{.64480}\\\hline
         Weighted by variances &$\sum_{i=1}^{p}\bSigma_{ii}\bbI(\sgn(\btheta_{i}^{\ast})=\sgn(\hat{\btheta}_{i}))/\sum_{j=1}^{p}\bSigma_{ii}$ & \textbf{.79131} & .64480\\\hline
    \end{tabular}
    \caption{The sign recovery of the minimizer. 
    The numbers are the means taken over all 100 experiments.
    $\btheta^{\ast}$ and $\hat{\btheta}$ represent the true value of $\btheta$ generating output variables and the minimizer of the problem \eqref{eq:constrainedlr}.
    The subscript $i$ of vectors denotes the $i$-th element of them, and the subscript $ii$ of matrices denotes the $(i,i)$-th element of them (or the $i$-th diagonal element).}
    \label{tab:experiment:sign}
\end{table}

\section{Proof of the main theorem} \label{sec:proof}
We first introduce three preliminary results in Section \ref{sec:preliminary} and show the proof of Theorem \ref{thm:ucb} using them in Section \ref{sec:mainproof}.
The proofs of the preliminary results are given in Appendix.
\subsection{Preliminary results}\label{sec:preliminary}
\subsubsection{PAC-Bayes bounds with second-order expansion}\label{sec:pacbm}
Let $(\samplespace,\calM)$ be a measurable space and $\{V_{i};i=1\ldots,n\}$ be a sequence of $(\samplespace,\calM)$-valued i.i.d.~random variables with probability measure $P^{V}$.
Fix $R\ge0$.
We let $\{\BM_{t}^{\btheta};t\ge0\}$ with $\btheta\in B^{p}[R]$ represent a $p$-dimensional Brownian motion starting at $\btheta$, that is, $\BM_{t}^{\btheta}=\btheta+\BM_{t}$,
where $\{\BM_{t};t\ge0\}$ is the $p$-dimensional standard Brownian motion starting at zero.
\begin{proposition}[PAC-Bayes bounds with second-order expansion]\label{prop:pacbm}
    Fix $t>0$.
    Let $f:\samplespace\times\R^{p}\ni(v,\bw)\mapsto f(v,\bw)\in\R$ be a measurable function satisfying the following:
    \begin{itemize}
        \item[(i)] For $P^{V}$-almost all $v\in\samplespace$, the following hold: $f(v,\cdot)\in\mathcal{C}^{2}(\R^{p};\R)$, and the gradient and Laplacian of $f(v,\cdot)$ equal some measurable functions $\nabla_{\bw}f:\samplespace\times\R^{p}\to\R^{p}$ and $\Delta_{\bw}f:\samplespace\times\R^{p}\to\R$ such that
        \begin{itemize}
            \item[(a)] $\nabla_{\bw} f(v,\cdot)$ is at most of polynomial growth.
            \item[(b)] For all $\btheta\in\PS$,
            \begin{equation*}
            \int_{0}^{t}\E_{V,\BM}\left[\left|\left.\Delta_{\bw}f(V_{i},\bw)\right|_{\bw=\BM_{s}^{\btheta}}\right|\right]\diff s<\infty.
        \end{equation*}
        \end{itemize}
        \item[(ii)] For all $\bw\in\R^{p}$, $|f(V_{i},\bw)|<\infty$ almost surely, and for all $\btheta\in\PS$, $\E_{V}[|f(V_{i},\btheta)|]<\infty$.
        \item[(iii)] 
        For some $\eta:\R^{p}\to[0,\infty)$, for all $\lambda\in\R$ and $\bw\in\R^{p}$,
        \begin{equation*}
            \E_{V}\left[\exp\left(\lambda\left(f(V_{1},\bw)-\E_{V}\left[f(V_{1},\bw)\right]\right)\right)\right]\le \exp\left(\frac{\lambda^{2}\eta^{2}(\bw)}{8}\right),
        \end{equation*}
        and $\sup_{\btheta\in\PS}\E[\eta^{2}(\BM_{t}^{\btheta})]<\infty$.
    \end{itemize}
    Then with probability at least $1-\delta$, for all $\btheta\in\PS$,
    \begin{align}
        \frac{1}{n}\sum_{i=1}^{n}\E_{V}\left[f\left(V_{i},\btheta\right)\right]\le &\,\frac{1}{n}\sum_{i=1}^{n}f\left(V_{i},\btheta\right)+\sqrt{\frac{\sup_{\btheta\in\PS}\E[\eta^{2}(\BM_{t}^{\btheta})](R^{2}/(2t)+\log\delta^{-1})}{2n}}\notag\\
        &+\frac{1}{2n}\sum_{i=1}^{n}\int_{0}^{t}\left(\E_{\BM}\left[\left.\Delta_{\bw}f\left(V_{i},\bw\right)\right|_{\bw=\BM_{s}^{\btheta}}\right]-\E_{V,\BM}\left[\left.\Delta_{\bw}f\left(V_{i},\bw\right)\right|_{\bw=\BM_{s}^{\btheta}}\right]\right)\diff s,
    \end{align}
    and with probability at least $1-\delta$, for all $\btheta\in\PS$,
    \begin{align}
        \frac{1}{n}\sum_{i=1}^{n}f\left(V_{i},\btheta\right)\le &\,\frac{1}{n}\sum_{i=1}^{n}\E_{V}\left[f\left(V_{i},\btheta\right)\right]+\sqrt{\frac{\sup_{\btheta\in\PS}\E[\eta^{2}(\BM_{t}^{\btheta})](R^{2}/(2t)+\log\delta^{-1})}{2n}}\notag\\
        &+\frac{1}{2n}\sum_{i=1}^{n}\int_{0}^{t}\left(\E_{V,\BM}\left[\left.\Delta_{\bw}f\left(V_{i},\bw\right)\right|_{\bw=\BM_{s}^{\btheta}}\right]-\E_{\BM}\left[\left.\Delta_{\bw}f\left(V_{i},\bw\right)\right|_{\bw=\BM_{s}^{\btheta}}\right]\right)\diff s.
    \end{align}
\end{proposition}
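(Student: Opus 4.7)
The plan is to combine It\^{o}'s formula for $p$-dimensional Brownian motion with the Donsker--Varadhan variational representation of the Kullback--Leibler divergence, choosing the prior $\pi$ to be the law of $\BM_{t}$ (i.e., $\Gaussian(\zero,t\bfI_{p})$) and the posterior $\rho_{\btheta}$ to be the law of $\BM_{t}^{\btheta}$ (i.e., $\Gaussian(\btheta,t\bfI_{p})$), so that $\RE(\rho_{\btheta}\|\pi)=\|\btheta\|^{2}/(2t)\le R^{2}/(2t)$ for every $\btheta\in\PS$. Writing $Z_{n}(\bw):=\frac{1}{n}\sum_{i=1}^{n}(f(V_{i},\bw)-\E_{V}[f(V_{1},\bw)])$, the first move is It\^{o}'s formula applied to $f(V_{i},\cdot)\in\calC^{2}(\R^{p};\R)$: since $\nabla_{\bw}f(V_{i},\cdot)$ is of at most polynomial growth by (i)(a), the stochastic integral $\int_{0}^{t}\langle\nabla_{\bw}f(V_{i},\BM_{s}^{\btheta}),\diff\BM_{s}\rangle$ is a true martingale, and (i)(b) together with Fubini yields
\begin{equation*}
    \E_{\BM}[f(V_{i},\BM_{t}^{\btheta})]-f(V_{i},\btheta)=\tfrac{1}{2}\int_{0}^{t}\E_{\BM}[\Delta_{\bw}f(V_{i},\BM_{s}^{\btheta})]\,\diff s.
\end{equation*}
Averaging over $i$ and subtracting the $\E_{V}$-integrated analogue gives the exact identity
\begin{equation*}
    \E_{\BM}[Z_{n}(\BM_{t}^{\btheta})]=Z_{n}(\btheta)+\tfrac{1}{2n}\sum_{i=1}^{n}\int_{0}^{t}\bigl(\E_{\BM}[\Delta_{\bw}f(V_{i},\BM_{s}^{\btheta})]-\E_{V,\BM}[\Delta_{\bw}f(V_{i},\BM_{s}^{\btheta})]\bigr)\,\diff s,
\end{equation*}
which converts the two claimed inequalities into one-sided sub-Gaussian bounds on $\pm\E_{\BM}[Z_{n}(\BM_{t}^{\btheta})]$ uniform in $\btheta\in\PS$.

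Next, the i.i.d.~assumption together with (iii) gives, for every $\bw\in\R^{p}$ and $\lambda\in\R$, the exponential-moment inequality $\E_{V}[\exp(\lambda Z_{n}(\bw)-\lambda^{2}\eta^{2}(\bw)/(8n))]\le 1$. Setting $h(\bw):=\lambda Z_{n}(\bw)-\lambda^{2}\eta^{2}(\bw)/(8n)$ and invoking the Donsker--Varadhan duality
\begin{equation*}
    \sup_{\rho\ll\pi}\{\E_{\rho}[h]-\RE(\rho\|\pi)\}=\log\E_{\pi}[\exp(h)],
\end{equation*}
combined with Fubini (swapping $\E_{V}$ and $\E_{\pi}$) and Markov's inequality, gives the \emph{simultaneous} bound
\begin{equation*}
    \lambda\E_{\rho}[Z_{n}]-\tfrac{\lambda^{2}}{8n}\E_{\rho}[\eta^{2}]-\RE(\rho\|\pi)\le\log\delta^{-1}\quad\text{for every }\rho\ll\pi,
\end{equation*}
with probability at least $1-\delta$. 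Specializing $\rho=\rho_{\btheta}$, using $\E_{\rho_{\btheta}}[\eta^{2}(\bw)]=\E[\eta^{2}(\BM_{t}^{\btheta})]\le\sup_{\btheta'\in\PS}\E[\eta^{2}(\BM_{t}^{\btheta'})]$ and $\RE(\rho_{\btheta}\|\pi)\le R^{2}/(2t)$, and optimizing over the \emph{deterministic} quantity $\lambda>0$, one obtains the square-root term of the proposition. Substituting this PAC-Bayes bound into the It\^{o} identity of the previous paragraph produces the second inequality of the statement; the first inequality follows from the same argument applied to $-Z_{n}(\bw)$, which inherits the same sub-Gaussian MGF bound under (iii).

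The delicate point, and the reason for the specific parametrization, is the handling of the $\bw$-dependent variance proxy $\eta^{2}(\bw)$. A naive sub-Gaussian PAC-Bayes step would land on the intractable quantity $\log\E_{\pi}[\exp(\lambda^{2}\eta^{2}/(8n))]$, demanding exponential integrability of $\eta^{2}$ under the prior; absorbing $\lambda^{2}\eta^{2}(\bw)/(8n)$ into the test function $h$ \emph{before} applying Donsker--Varadhan replaces this by $\E_{\rho}[\eta^{2}]$, linear in the posterior, which is then controlled uniformly over the family $\{\rho_{\btheta}\}_{\btheta\in\PS}$ by the finite quantity $\sup_{\btheta\in\PS}\E[\eta^{2}(\BM_{t}^{\btheta})]$. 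The remaining bookkeeping--legitimacy of the Fubini interchanges, of the It\^{o} application, and of the vanishing of the martingale expectation--is routine given the polynomial-growth and integrability hypotheses (i)(a)--(ii).
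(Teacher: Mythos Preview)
Your proposal is correct and follows essentially the same route as the paper: the paper packages the Donsker--Varadhan/Markov step (with the variance proxy $\eta^{2}(\bw)$ absorbed into the test function before dualizing) as a separate lemma extending \citet{alquier2016properties}, then combines it with the same It\^{o}-formula identity and the same optimization of $\lambda$. The only point the paper treats more carefully is the variational representation itself: it uses Catoni's truncation device $\sup_{b\in\R}\log\E_{\pi}[\exp(\min\{b,h\})]$ to justify the duality and the Fubini swap when $h$ may be unbounded, whereas you label this as routine bookkeeping.
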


\subsubsection{Bernstein-type inequalities}\label{sec:bernstein}

Let $\bz=(\bz_{1},\ldots,\bz_{n})\in\R^{np}$ and $\tilde{\BM}_{s}^{\btheta}=\bLambda^{1/2}\bfU^{\top}(\btheta+\BM_{s})$.
We set
\begin{align}
    G_{t}^{\btheta}(\bz)&:=\frac{1}{2n}\sum_{i=1}^{n}\int_{0}^{t}\E_{\BM}\left[\sigma\left(1-\sigma\right)\left(\left\langle \tilde{\BM}_{s}^{\btheta},\bz_{i}\right\rangle\right)\right]\left\langle \bLambda\bz_{i},\bz_{i}\right\rangle\diff s\notag\\
    &\quad-\frac{1}{2n}\sum_{i=1}^{n}\int_{0}^{t}\E_{\BM,\bZ}\left[\sigma\left(1-\sigma\right)\left(\left\langle \tilde{\BM}_{s}^{\btheta},\bZ_{i}\right\rangle\right)\left\langle \bLambda\bZ_{i},\bZ_{i}\right\rangle\right]\diff s.
\end{align}

We give the following Bernstein-type inequalities.
\begin{proposition}\label{prop:bernstein}
    Suppose that Assumption \ref{assumption:1} holds.
    For all $u>0$ and $t>0$,
    \begin{align}
        \Pr&\left(\left|\sup_{\btheta\in\PS}G_{t}^{\btheta}(\bZ)-\E\left[\sup_{\btheta\in\PS}G_{t}^{\btheta}(\bZ)\right]\right|\ge u\right)\notag \\
        &\le 2\exp\left(-\frac{n}{6K^{2}}\min\left\{\frac{u^{2}}{155t\left(256+R^{2}\tr\left(\bSigma\right)\right)},\frac{u}{5\sqrt{t}R\|\bSigma\|}\right\}\right),
    \end{align}
    and
    \begin{align}
        \Pr&\left(\left|\sup_{\btheta\in\PS}(-1)G_{t}^{\btheta}(\bZ)-\E\left[\sup_{\btheta\in\PS}(-1)G_{t}^{\btheta}(\bZ)\right]\right|\ge u\right)\notag \\
        &\le 2\exp\left(-\frac{n}{6K^{2}}\min\left\{\frac{u^{2}}{155t\left(256+R^{2}\tr\left(\bSigma\right)\right)},\frac{u}{5\sqrt{t}R\|\bSigma\|}\right\}\right).
    \end{align}
\end{proposition}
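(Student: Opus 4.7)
The plan is to obtain Bernstein-type concentration of $F(\bz):=\sup_{\btheta\in\PS}G_{t}^{\btheta}(\bz)$ about $\E F(\bZ)$ (and analogously for $-F$) by combining the sub-Gaussian concentration of Lipschitz functions (Assumption~\ref{assumption:1}(iii)) with a two-scale (bulk $+$ truncation) analysis of the Lipschitz constant of $F$. The sub-Gaussian regime $u^{2}/(256+R^{2}\tr(\bSigma))$ will come from a ``bulk'' Lipschitz bound that does not grow with $\bz$, while the sub-exponential regime $u/(\sqrt{t}R\|\bSigma\|)$ will come from a term in $\nabla F$ that grows with $\|\bLambda^{1/2}\bz_{i}\|$, controlled by truncating this norm.

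The first step is to compute $\nabla_{\bz_{i}}G_{t}^{\btheta}(\bz)$ pointwise. Since $\langle\tilde{\BM}_{s}^{\btheta},\bz_{i}\rangle$ is a one-dimensional Gaussian in $\BM$ with mean $\langle\bLambda^{1/2}\bfU^{\top}\btheta,\bz_{i}\rangle$ and variance $s\langle\bLambda\bz_{i},\bz_{i}\rangle$, Stein's lemma lets me rewrite
\begin{equation*}
\E_{\BM}[h'(\langle\tilde{\BM}_{s}^{\btheta},\bz_{i}\rangle)\tilde{\BM}_{s}^{\btheta}]=\bLambda^{1/2}\bfU^{\top}\btheta\,\E_{\BM}[h'(\cdot)]+s\bLambda\bz_{i}\,\E_{\BM}[h''(\cdot)],
\end{equation*}
where $h=\sigma(1-\sigma)$ has bounded derivatives. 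This is the reason for using the expectation over $\BM$ in the definition of $G_{t}^{\btheta}$: it avoids spurious $\sqrt{p}$ factors and keeps the gradient controlled only by $\bLambda$-weighted quantities. Combined with $\|\bLambda^{1/2}\bfU^{\top}\btheta\|\le R\|\bSigma\|^{1/2}$ and Danskin's envelope theorem for the sup over the compact set $\PS$, this yields a uniform pointwise bound on $\|\nabla_{\bz_{i}}F(\bz)\|$ whose square decomposes into an $\bz$-independent contribution (of order $t\|\bLambda\bz_{i}\|^{2}/n^{2}$, which will give the sub-Gaussian rate after summing over $i$) and a contribution quadratic in $\langle\bLambda\bz_{i},\bz_{i}\rangle$ (which will give the sub-exponential rate).

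The second step is truncation against the event $\calB_{M}:=\{\max_{i}\|\bLambda^{1/2}\bZ_{i}\|^{2}\le\tr(\bSigma)+M^{2}\}$. The concentration property applied to the $\|\bSigma\|^{1/2}$-Lipschitz function $\bz\mapsto\|\bLambda^{1/2}\bz_{i}\|$, together with $\E\|\bLambda^{1/2}\bZ_{i}\|\le\tr(\bSigma)^{1/2}$ and a union bound, gives $\Pr(\calB_{M}^{c})\le 2n\exp(-M^{2}/(K^{2}\|\bSigma\|))$. On $\calB_{M}$, one has $\sum_{i}\|\bLambda\bz_{i}\|^{2}\le n\|\bSigma\|(\tr(\bSigma)+M^{2})$ and similar control on the higher-order sums, so that the previous gradient bound yields a uniform Lipschitz constant $L(M)$ with the clean decomposition $L(M)^{2}\le C_{1}t(256+R^{2}\tr(\bSigma))/n+C_{2}t\|\bSigma\|^{2}R^{2}M^{2}/n$. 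A McShane extension $F^{(M)}$ that is globally $L(M)$-Lipschitz and agrees with $F$ on $\calB_{M}$ then inherits the sub-Gaussian tail $\exp(-u^{2}/(K^{2}L(M)^{2}))$ from the concentration property, and the gap $|\E F-\E F^{(M)}|$ is absorbed by $\Pr(\calB_{M}^{c})$ since $|F|$ is controlled quadratically.

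The final step is to optimize $M$ to balance the two terms in $L(M)^{2}$: small $M$ yields the sub-Gaussian regime $nu^{2}/(K^{2}t(256+R^{2}\tr(\bSigma)))$, while $M\asymp u\sqrt{n}/(K\sqrt{t}R\|\bSigma\|)$ yields the sub-exponential regime $nu/(K^{2}\sqrt{t}R\|\bSigma\|)$, giving the stated bound. The identical argument applied to $-F$ (whose gradient bound is sign-independent) gives the second inequality. The main obstacle will be the bookkeeping needed to extract the explicit numerical constants $155$, $256$, $5$, and $6$: the Stein-based gradient bound produces several summands that have to be carefully combined with the McShane extension and the optimal choice of $M$ so that the right combination of $\|\bSigma\|$ and $\tr(\bSigma)$ appears in each piece of $L(M)^{2}$.
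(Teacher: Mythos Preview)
Your high-level strategy—bound $\|\nabla_{\bz}G_t^{\btheta}\|$ pointwise, truncate to a set where this bound is uniform, apply the concentration property to a Lipschitz extension, then combine the two tails by optimising the truncation level—is exactly what the paper does (it packages the last step as a version of Adamczak's lemma). But two of your specific choices break the argument at the level of the stated constants and, more importantly, the $n$-scaling.

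First, your use of Stein's lemma goes the wrong way. Writing $\E_{\BM}[h'(\cdot)\tilde{\BM}_s^{\btheta}]=\bLambda^{1/2}\bfU^{\top}\btheta\,\E[h']+s\bLambda\bz_i\,\E[h'']$ and bounding $|\E[h']|,|\E[h'']|$ by sup-norms gives, after the $\int_0^t$ and the $\langle\bLambda\bz_i,\bz_i\rangle$ prefactor, a per-coordinate gradient of order $t\langle\bLambda\bz_i,\bz_i\rangle$; your claimed decomposition $L(M)^2\le C_1t(256+R^2\tr(\bSigma))/n+C_2t\|\bSigma\|^2R^2M^2/n$ does \emph{not} follow from this (you get $t^2$ and a contribution quadratic in $\tr(\bSigma)+M^2$, as you yourself note). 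The paper instead runs Stein's lemma in reverse: since $\sigma(1-\sigma)=\sigma'$, one has $\E_\zeta[\sigma(1-\sigma)(\mu+\lambda\zeta)]=\lambda^{-1}\E_\zeta[\sigma(\mu+\lambda\zeta)\zeta]$ with $\lambda=\sqrt{s\langle\bLambda\bz_i,\bz_i\rangle}$, and similarly for the other pieces via $H_2,H_3$. The resulting $1/\sqrt{s\langle\bLambda\bz_i,\bz_i\rangle}$ integrates to $2\sqrt{t}/\sqrt{\langle\bLambda\bz_i,\bz_i\rangle}$ and cancels one power of $\langle\bLambda\bz_i,\bz_i\rangle$, yielding the sharp bound $\|\nabla_{\bz}G_t^{\btheta}\|\le\sqrt{2t\|\bSigma\|/(\pi n)}\bigl(4+\sqrt{(R^2/16n)\sum_i\langle\bLambda\bz_i,\bz_i\rangle}\bigr)$. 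This square-root improvement is precisely what produces the $t$ (not $t^2$) in the sub-Gaussian denominator and the $\sqrt{t}$ in the sub-exponential one.

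Second, truncating on $\max_i\|\bLambda^{1/2}\bZ_i\|$ with a union bound is too crude. It costs you a factor $n$ in $\Pr(\calB_M^c)$, hence a spurious $\log n$; worse, on $\calB_M$ you only get $\sum_i\langle\bLambda\bz_i,\bz_i\rangle\le n(\tr(\bSigma)+M^2)$, so even with the sharp gradient bound the $M$-dependent piece of $L(M)$ is larger by $\sqrt{n}$ than what is needed, and balancing gives a sub-exponential exponent of order $\sqrt{n}\,u/(K^2\sqrt{t}R\|\bSigma\|)$ rather than $n\,u/(K^2\sqrt{t}R\|\bSigma\|)$ (your claimed choice $M\asymp u\sqrt{n}/(K\sqrt{t}R\|\bSigma\|)$ does not actually yield the rate you assert). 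The paper instead truncates on the single $\sqrt{\|\bSigma\|}$-Lipschitz function $\bz\mapsto\sqrt{\sum_i\langle\bLambda\bz_i,\bz_i\rangle}$ of the \emph{whole} vector $\bZ$; Assumption~\ref{assumption:1}(iii) then controls it without any union bound, and on the resulting set the $\sqrt{v}$-coefficient in the Lipschitz constant is of order $R\|\bSigma\|\sqrt{t}/n$, which after Adamczak's lemma gives the correct $nu/(K^2\sqrt{t}R\|\bSigma\|)$.
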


\subsubsection{Expectation of the suprema of the Laplacian gaps}\label{sec:expsup}

\begin{proposition}\label{prop:expsup}
    Suppose that Assumption \ref{assumption:1} holds.
    For any $t\in(0,1]$,
    \begin{equation}
        \E_{\bZ}\left[\sup_{\btheta\in\PS}G_{t}^{\btheta}\left(\bZ\right)\right]\le 2R\sqrt{\frac{\tr(\bSigma)}{n}},\quad
        \E_{\bZ}\left[\sup_{\btheta\in\PS}(-1)G_{t}^{\btheta}\left(\bZ\right)\right]\le 2R\sqrt{\frac{\tr(\bSigma)}{n}}.
    \end{equation}
\end{proposition}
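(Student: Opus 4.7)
The plan is to linearize the time integral in $G_{t}^{\btheta}$ via an It\^{o} identity so that the resulting centered empirical process is indexed by $1$-Lipschitz functions of $\langle \bX_{i}, \btheta\rangle$, and then invoke symmetrization together with the one-sided Ledoux--Talagrand contraction principle to reduce the problem to the linear Rademacher complexity of $\PS$. The auxiliary function is $L(\bx, \btheta) := \log(1+\exp(\langle \bx, \btheta\rangle))$, whose Laplacian in $\btheta$ is $\Delta_{\btheta}L(\bx, \btheta) = \|\bx\|^{2}\sigma(1-\sigma)(\langle \bx, \btheta\rangle)$. Applying It\^{o}'s formula to $s \mapsto L(\bx, \btheta + \BM_{s})$ and taking expectations yields $\tfrac{\|\bx\|^{2}}{2}\int_{0}^{t}\E_{\BM}[\sigma'(\langle \bx, \btheta + \BM_{s}\rangle)]\,ds = \E_{\BM}[L(\bx, \btheta + \BM_{t})] - L(\bx, \btheta)$. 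Combined with $\langle \tilde{\BM}_{s}^{\btheta}, \bZ_{i}\rangle = \langle \btheta + \BM_{s}, \bX_{i}\rangle$ and $\langle \bLambda \bZ_{i}, \bZ_{i}\rangle = \|\bX_{i}\|^{2}$ for $\bX_{i} := \bfU\bLambda^{1/2}\bZ_{i}$, this recasts $G_{t}^{\btheta}(\bZ) = \tfrac{1}{n}\sum_{i}(A_{i}(\btheta) - \E_{\bZ}[A_{i}(\btheta)])$ with $A_{i}(\btheta) := \E_{\BM}[L(\bX_{i}, \btheta + \BM_{t})] - L(\bX_{i}, \btheta)$.

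Next I would exploit that $A_{i}(\btheta)$ depends on $\btheta$ only through the scalar $\langle \bX_{i}, \btheta\rangle$. Since $\langle \bX_{i}, \BM_{t}\rangle\,|\,\bX_{i} \sim \Gaussian(0, t\|\bX_{i}\|^{2})$, one has $A_{i}(\btheta) = a_{i}(\langle \bX_{i}, \btheta\rangle)$ with $a_{i}(u) := \E_{\zeta}[\log(1+e^{u+\sqrt{t}\|\bX_{i}\|\zeta})] - \log(1+e^{u})$ for $\zeta \sim \Gaussian(0,1)$. Its derivative $a_{i}'(u) = \E_{\zeta}[\sigma(u+\sqrt{t}\|\bX_{i}\|\zeta)] - \sigma(u)$ lies in $[-1,1]$ because $\sigma \in [0,1]$, so $a_{i}$ is $1$-Lipschitz with a constant crucially free of $\|\bX_{i}\|$. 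Introducing i.i.d.\ Rademacher variables $\{\epsilon_{i}\}$ independent of $\bZ$, standard symmetrization gives $\E_{\bZ}\sup_{\btheta \in \PS}G_{t}^{\btheta}(\bZ) \le 2\E_{\bZ, \epsilon}\sup_{\btheta \in \PS}\tfrac{1}{n}\sum_{i}\epsilon_{i}A_{i}(\btheta)$. Subtracting the $\btheta$-free constants $a_{i}(0)$, whose Rademacher average has zero expectation, the functions $\tilde{a}_{i} := a_{i} - a_{i}(0)$ satisfy $\tilde{a}_{i}(0) = 0$ and remain $1$-Lipschitz; applying the one-sided Ledoux--Talagrand contraction principle conditionally on $\bZ$ gives $\E_{\epsilon}\sup_{\btheta}\sum_{i}\epsilon_{i}A_{i}(\btheta) \le \E_{\epsilon}\sup_{\btheta}\sum_{i}\epsilon_{i}\langle \bX_{i}, \btheta\rangle = R\|\sum_{i}\epsilon_{i}\bX_{i}\|$. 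Jensen's inequality together with $\E\|\sum_{i}\epsilon_{i}\bX_{i}\|^{2} = n\tr(\bSigma)$ then gives $\E\|\tfrac{1}{n}\sum_{i}\epsilon_{i}\bX_{i}\| \le \sqrt{\tr(\bSigma)/n}$, assembling to the claimed $2R\sqrt{\tr(\bSigma)/n}$. The second inequality follows by repeating the argument with $A_{i}$ replaced by $-A_{i}$, which is again $1$-Lipschitz in $\langle \bX_{i}, \btheta\rangle$.

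The delicate step is preserving a dimension-free Lipschitz constant for $a_{i}$. A finer estimate $|a_{i}'(u)| \lesssim \sqrt{t}\|\bX_{i}\|$ obtained through the $1/4$-Lipschitz constant of $\sigma$ and Gaussian averaging would carry an extra factor $\|\bX_{i}\|^{2}$ through the contraction stage, producing a bound scaling with $\E\|\bX_{i}\|^{4}$ rather than $\tr(\bSigma)$ and thereby losing dimension-freeness. The coarser bound $|a_{i}'| \le 1$ straight from $\sigma \in [0,1]$ is precisely what lets the linear Rademacher rate $R\sqrt{\tr(\bSigma)/n}$ carry through cleanly.
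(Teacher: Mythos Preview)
Your proof is correct and follows the same overall architecture as the paper's: symmetrize, apply the one-sided contraction principle to reduce to the linear class $\{\btheta\mapsto\langle\bX_{i},\btheta\rangle\}$, and finish with the Cauchy--Schwarz bound on the linear Rademacher complexity. The substantive difference is how you obtain the crucial dimension-free Lipschitz constant. The paper works \emph{after} symmetrization with the time-integrated quantity $\nu_{i}\mapsto\int_{0}^{t}\E_{\zeta}[\sigma(1-\sigma)(\lambda_{i}(\nu_{i}+\sqrt{s}\zeta))]\diff s\cdot\lambda_{i}^{2}$: it differentiates under the integral, rewrites the derivative via Stein/Hermite identities, and then invokes an auxiliary Ornstein--Uhlenbeck identity (the paper's Lemma on $\int_{0}^{t}s^{-1}\E[f(\sqrt{s}\zeta)(\zeta^{2}-1)]\diff s$) to evaluate the resulting time integral and obtain the bound $2\lambda_{i}$. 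You instead collapse the time integral \emph{before} symmetrizing, by applying It\^{o}'s formula to the antiderivative $L(\bx,\btheta)=\log(1+e^{\langle\bx,\btheta\rangle})$ to get the closed form $A_{i}(\btheta)=\E_{\BM}[L(\bX_{i},\btheta+\BM_{t})]-L(\bX_{i},\btheta)$; the derivative in $u=\langle\bX_{i},\btheta\rangle$ is then $\E_{\zeta}[\sigma(u+\sqrt{t}\|\bX_{i}\|\zeta)]-\sigma(u)\in[-1,1]$ with no further work.

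The two computations are equivalent---the paper's OU identity is precisely the differentiated version of your It\^{o} identity---but your route is shorter and sidesteps the auxiliary lemma entirely. A minor bonus is that your Lipschitz bound $|a_{i}'|\le1$ holds for all $t>0$, whereas the paper's intermediate lemma is stated for $t\in[0,1]$; this does not affect the final statement, which in either case only needs $t\in(0,1]$.
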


\subsection{Proof of Theorem \ref{thm:ucb}}\label{sec:mainproof}

\begin{proof}[Proof of Theorem \ref{thm:ucb}]
We discuss a uniform upper bound for $\mathcal{R}(\btheta)-\mathcal{R}_{n}(\btheta)$ and see that the same argument holds for $\mathcal{R}_{n}(\btheta)-\mathcal{R}(\btheta)$ immediately by flipping the sign.

(Step 1) Since $-y\log(\sigma(t))-(1-y)\log((1-\sigma(t)))\in[0,\log2+|t|]$ for all $y\in\{0,1\}$ and $t\in\R$, 
\begin{equation}
    f(V_{i},\bw):=-Y_{i}\log(\sigma(\langle \bX_{i},\bw\rangle))-(1-Y_{i})\log(1-\sigma(\langle \bX_{i},\bw\rangle))
\end{equation}
with $V_{i}=(\bX_{i},Y_{i})$ (or equivalently $V_{i}=(\bZ_{i},Y_{i})$) for each $\bw\in\R^{p}$ is a subgaussian random variable such that $\|f(V_{i},\bw)\|_{\psi_{2}}\le \log2+\|\bZ_{i}\|_{\psi_{2}}\|\bLambda^{1/2}\bfU^{\top}\bw\|$.
Proposition 2.5.2 of \citet{vershynin2018high} (the flow (iv) $\Rightarrow^{\times 1}$ (i) $\Rightarrow^{\times 2}$ (ii), where $x>0$ in $\Rightarrow^{\times x}$ denotes the absolute scales in the proof) gives $\|f(V_{i},\bw)-\E_{V}[f(V_{i},\bw)]\|_{\psi_{2}}\le 3\|f(V_{i},\bw)\|_{\psi_{2}}$.
Using $\|\bZ_{i}\|_{\psi_{2}}\le 1+\sqrt{3}K$ by Assumption \ref{assumption:1},
Proposition 2.5.2 of \citet{vershynin2018high} (the flow (iv) $\Rightarrow^{\times 1}$ (i) $\Rightarrow^{\times 2}$ (ii) $\Rightarrow^{\times 1/(2\sqrt{e})}$ (iii) $\Rightarrow^{\times 1}$ (v)) gives that for any $\lambda\in\R$ and $\bw\in\R^{p}$,
\begin{equation}\label{eq:proof:main:subgauss}
    \E_{V}\left[\exp\left(\lambda\left(f(V_{i},\bw)-\E_{V}[f(V_{i},\bw)]\right)\right)\right]\le \exp\left(\frac{9\lambda^{2}}{e}\left(\log2+\left(1+\sqrt{3}K\right)\left\|\bLambda^{1/2}\bfU^{\top}\bw\right\|\right)^{2}\right).
\end{equation}
Thus we choose $\eta:\R^{p}\to[0,\infty)$ such that
\begin{equation}
    \eta^{2}(\bw):=\frac{72}{e}\left(\log2+\left(1+\sqrt{3}K\right)\left\|\bLambda^{1/2}\bfU^{\top}\bw\right\|\right)^{2}.
\end{equation}
Since
\begin{equation*}
    \E_{\BM}\left[\left\|\bLambda^{1/2}\bfU^{\top}\BM_{t}^{\btheta}\right\|^{2}\right]=\E_{\BM}\left[\left\langle \bSigma \left(\BM_{t}+\btheta\right),\BM_{t}+\btheta\right\rangle\right]=t\tr(\bSigma)+\langle \bSigma\btheta,\btheta\rangle,
\end{equation*}
for all $t>0$ and $\btheta\in\PS$, 
\begin{equation}
    \E_{\BM}\left[\eta^{2}\left(\BM_{t}^{\btheta}\right)\right]\le \frac{144}{e}\left(1+\left(1+\sqrt{3}K\right)^{2}\left(t\tr(\bSigma)+\|\bSigma\|R^{2}\right)\right)
\end{equation}
by the fact that $(a+b)^{2}\le 2(a^{2}+b^{2})$ for all $a,b\in\R$.

(Step 2) We apply Proposition \ref{prop:pacbm} for $\mathcal{R}(\btheta)-\mathcal{R}_{n}(\btheta)$.
For all $t>0$, Conditions (i) and (ii) of Proposition \ref{prop:pacbm} are immediate since the gradient and Laplacian of $f$ are
\begin{align*}
    \nabla_{\bw}f(V_{i},\bw)&=-Y_{i}\bX_{i}+\sigma\left(\left\langle \bX_{i},\bw\right\rangle\right)\bX_{i}=-Y_{i}\bfU\bLambda^{1/2}\bZ_{i}+\sigma\left(\left\langle \bfU\bLambda^{1/2}\bZ_{i},\bw\right\rangle\right)\bfU\bLambda^{1/2}\bZ_{i},\\
    \Delta_{\bw}f(V_{i},\bw)&=\left(\sigma\left(1-\sigma\right)\right)\left(\langle\bX_{i},\bw\rangle\right)\left\|\bX_{i}\right\|^{2}=\left(\sigma\left(1-\sigma\right)\right)\left(\left\langle \bfU\bLambda^{1/2}\bZ_{i},\bw\right\rangle\right)\left\langle\bLambda\bZ_{i},\bZ_{i}\right\rangle.
\end{align*}
Condition (iii) of Proposition \ref{prop:pacbm} holds for all $t>0$ by the discussion above.
For all $t>0$, with probability at least $1-\delta$, for all $\btheta\in\PS$,
\begin{align}
    \mathcal{R}(\btheta)-\mathcal{R}_{n}(\btheta)&\le \sqrt{\frac{27(1+(1+\sqrt{3}K)^{2}(t\tr(\bSigma)+\|\bSigma\|R^{2}))(R^{2}/(2t)+\log(\delta^{-1}))}{n}}\notag\\
    &\quad+\frac{1}{2n}\sum_{i=1}^{n}\int_{0}^{t}\left(\E_{\BM}\left[\left.\Delta_{\bw}f\left(V_{i},\bw\right)\right|_{\bw=\BM_{s}^{\btheta}}\right]-\E_{V,\BM}\left[\left.\Delta_{\bw}f\left(V_{i},\bw\right)\right|_{\bw=\BM_{s}^{\btheta}}\right]\right)\diff s.
\end{align}

(Step 3)
By Propositions \ref{prop:bernstein} and \ref{prop:expsup}, for all $t\in(0,1]$, with probability at least $1-3\delta$, for all $\btheta\in\PS$,
\begin{align}\label{eq:proof:main:witht}
    \mathcal{R}(\btheta)-\mathcal{R}_{n}(\btheta)&\le \sqrt{\frac{27(1+(1+\sqrt{3}K)^{2}(t\tr(\bSigma)+\|\bSigma\|R^{2}))(R^{2}/(2t)+\log(\delta^{-1}))}{n}}\notag\\
    &\quad+2R\sqrt{\frac{\tr(\bSigma)}{n}}+\sqrt{\frac{930tK^{2}\left(256+R^{2}\tr\left(\bSigma\right)\right)\log\delta^{-1}}{n}}+\frac{30\sqrt{t}K^{2}R\|\bSigma\|\log\delta^{-1}}{n}.
\end{align}
Letting $t=1/(12\log \delta^{-1})$ (note $\log\delta^{-1}>1$ by $\delta\le 1/6$), we obtain that with probability at least $1-3\delta$, for all $\btheta\in\PS$,
\begin{align}
    \mathcal{R}(\btheta)-\mathcal{R}_{n}(\btheta)
    &\le \sqrt{\frac{27(\log\delta^{-1}+(1+\sqrt{3}K)^{2}(\tr(\bSigma)/12+\|\bSigma\|R^{2}\log\delta^{-1}))(1+6R^{2})}{n}}\notag\\
    &\quad+2R\sqrt{\frac{\tr(\bSigma)}{n}}+\sqrt{\frac{78 K^{2}\left(256+R^{2}\tr\left(\bSigma\right)\right)}{n}}+\frac{9K^{2}R\|\bSigma\|\sqrt{\log\delta^{-1}}}{n}.
\end{align}

(Step 4) The same argument holds for $\Risk_{n}(\btheta)-\Risk(\btheta)$ by applying Propositions \ref{prop:pacbm}--\ref{prop:expsup}.

Hence, we obtain the conclusion.
\end{proof}

\begin{remark}
    We select $t=1/(12\log\delta^{-1})$, which approximately solves the following quadratic equation on $(0,\infty)$ under $K=\sqrt{2}$ (for standard Gaussian distributions), $R\gg1$, $R=\calO(1)$, $\|\bSigma\|=\calO(1)$, and $\tr(\bSigma)=\omega(1)$:
    \begin{equation*}
        1860t^{2}\left(256+R^{2}\tr\left(\bSigma\right)\right)\log\delta^{-1}=27(1+(1+\sqrt{6})^{2}(t\tr(\bSigma)+\|\bSigma\|R^{2}))(R^{2}/2+t\log(\delta^{-1})).
    \end{equation*}
    This equation lets the first and the third terms on the right-hand side of \eqref{eq:proof:main:witht} be equal to each other.
    Under different asymptotic settings, the optimal selection of $t$ can be different to our selection.
\end{remark}

\section*{Acknowledgements}
I gratefully acknowledge Pierre Alquier for his enlightening comments.
This work was supported by JSPS KAKENHI Grant Number JP24K02904 and JST CREST Grant Numbers JPMJCR21D2 and JPMJCR2115.

\begin{appendices}
\section{On the Hermite polynomials}
We present some known results on Stein's lemma and the Hermite polynomials used in the proofs of the technical results for readers' convenience.
Let us introduce the Hermite polynomials of degrees only up to 3: for any $x\in\R$,
$H_{0}(x)=1$, $H_{1}(x)=x$, $H_{2}(x)=x^{2}-1$, and $H_{3}(x)=x^{3}-3x$.
\begin{lemma}\label{lem:hermite}
    Suppose that $\zeta\sim\Gaussian(0,1)$.
    The following hold:
    \begin{enumerate}
        \item[(i)] For any $f\in\calC_{p}^{1}(\R;\R)$ and $d=0,1,2$, $\E[f'(\zeta)H_{d}(\zeta)]=\E[f(\zeta)H_{d+1}(\zeta)]$.
        \item[(ii)] For any $f\in\calC_{p}^{2}(\R;\R)$ and $d=0,1$, $\E[f''(\zeta)H_{d}(\zeta)]=\E[f(\zeta)H_{d+2}(\zeta)]$.
    \end{enumerate}
\end{lemma}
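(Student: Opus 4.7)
The plan is to reduce both parts of the lemma to the single identity
\begin{equation*}
\bigl(H_{d}(x)\phi(x)\bigr)' = -H_{d+1}(x)\phi(x), \qquad d=0,1,2,
\end{equation*}
where $\phi(x)=(2\pi)^{-1/2}e^{-x^{2}/2}$ is the standard Gaussian density. This is the standard Rodrigues-type identity for the probabilist's Hermite polynomials, and since the statement only needs degrees up to $3$, I would verify it by direct differentiation of the explicit formulas $H_{0}=1$, $H_{1}=x$, $H_{2}=x^{2}-1$, $H_{3}=x^{3}-3x$ given just before the lemma; for example $(H_{2}\phi)'=(2x-(x^{2}-1)x)\phi=-(x^{3}-3x)\phi=-H_{3}\phi$, and analogously for $d=0,1$.

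For part (i), I would write the expectation as a Lebesgue integral against $\phi$ and integrate by parts:
\begin{equation*}
\E[f'(\zeta)H_{d}(\zeta)]=\int_{\R}f'(x)H_{d}(x)\phi(x)\,\diff x=\bigl[f(x)H_{d}(x)\phi(x)\bigr]_{-\infty}^{\infty}-\int_{\R}f(x)\bigl(H_{d}\phi\bigr)'(x)\,\diff x.
\end{equation*}
Because $f\in\calC_{p}^{1}(\R;\R)$ and $H_{d}$ is a polynomial, the product $f\cdot H_{d}$ has polynomial growth, which is dominated by the factor $\phi$, so the boundary term vanishes. Substituting the Rodrigues identity then yields $\E[f'(\zeta)H_{d}(\zeta)]=\E[f(\zeta)H_{d+1}(\zeta)]$, valid for $d=0,1,2$.

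For part (ii), I would simply iterate part (i). Given $f\in\calC_{p}^{2}(\R;\R)$, the derivative $f'$ lies in $\calC_{p}^{1}(\R;\R)$, so applying (i) to $f'$ with index $d$ gives $\E[f''(\zeta)H_{d}(\zeta)]=\E[f'(\zeta)H_{d+1}(\zeta)]$; applying (i) again to $f$ with index $d+1$ (which requires $d+1\le 2$, hence $d\le 1$ as stated) gives $\E[f'(\zeta)H_{d+1}(\zeta)]=\E[f(\zeta)H_{d+2}(\zeta)]$, and chaining the two identities finishes the proof. There is no real obstacle here beyond bookkeeping: the only point requiring care is checking that the polynomial-growth hypothesis in $\calC_{p}^{r}$ guarantees integrability and vanishing boundary terms against the Gaussian weight, which it manifestly does.
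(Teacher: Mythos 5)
Your proof is correct and follows essentially the same route as the paper: the paper's one-line argument via Stein's lemma and the product rule, $\E[f'(\zeta)g(\zeta)]=\E[f(\zeta)(g(\zeta)\zeta-g'(\zeta))]$ with $g=H_{d}$, is just your Rodrigues identity $(H_{d}\phi)'=-H_{d+1}\phi$ combined with the same integration by parts, and both obtain (ii) by iterating (i). Your explicit verification of the boundary terms and of the degree bookkeeping ($d+1\le 2$ forcing $d\le 1$ in part (ii)) is sound.
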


\begin{proof}
    The statement (i) with $d=0$ is nothing more than Stein's lemma.
    Since any $f,g\in\calC_{p}^{1}(\R;\R)$,
    \begin{equation*}
        \E[f'(\zeta)g(\zeta)]=\E[(fg)'(\zeta)-f(\zeta)g'(\zeta)]=\E[f(\zeta)(g(\zeta)\zeta-g'(\zeta))],
    \end{equation*}
    we can easily check that the other statements hold.
\end{proof}

\section{Proofs of the technical results}

\subsection{Proof of Proposition \ref{prop:pacbm}}

We give an extension of Theorem 4.1 of \citet{alquier2016properties} to cases with pointwise Hoeffding-type exponential moment bounds.
Let $(\parspace,\paralgebra)$ denote a measurable space representing a general parameter space.

\begin{lemma}[PAC-Bayes bound with a pointwise Hoeffding-type condition]\label{lem:arc16}
    Fix a probability measure $\mu$ on $(\parspace,\paralgebra)$ and a measurable function $\phi:\samplespace\times\parspace\to\R$.
    Assume that (i) for some measurable function $\eta:\parspace\to[0,\infty)$, for all $\lambda\in\R$ and $w\in\parspace$, $\int_{\samplespace}\exp(\lambda \phi(v,w))P^{V}(\diff v)\le \exp(\lambda^{2}\eta^{2}(w)/8)$ and (ii) $|\phi(v,w)|<\infty$ $P^{V}\otimes\mu$-almost everywhere.
    For any $\delta\in(0,1]$ and $\lambda>0$, with probability at least $1-\delta$, for any probability measure $\rho$ on $(\parspace,\paralgebra)$,
    \begin{equation}
        \sum_{i=1}^{n}\int_{\parspace}\phi\left(V_{i},w\right)\rho\left(\diff w\right)\le \frac{n\lambda}{8}\int_{\parspace}\eta^{2}(w)\rho\left(\diff w\right)+\frac{\RE\left(\rho\|\mu\right)+\log\delta^{-1}}{\lambda}.
    \end{equation}
\end{lemma}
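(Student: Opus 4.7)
The plan is to follow the standard PAC-Bayes proof strategy based on an exponential moment bound, Fubini's theorem, Markov's inequality, and the Donsker--Varadhan variational formula for the Kullback--Leibler divergence.

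First, I would fix $w \in \parspace$ and use the i.i.d.\ structure of $\{V_i\}$ together with condition (i) to obtain the tensorised exponential moment bound
\begin{equation*}
    \E\left[\exp\left(\lambda \sum_{i=1}^{n}\phi(V_{i},w)\right)\right]
    = \prod_{i=1}^{n}\E\left[\exp(\lambda \phi(V_{i},w))\right]
    \le \exp\left(\frac{n\lambda^{2}\eta^{2}(w)}{8}\right),
\end{equation*}
which is equivalent to $\E[\exp(\lambda\sum_{i}\phi(V_{i},w) - n\lambda^{2}\eta^{2}(w)/8)] \le 1$.

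Next, I would integrate this against the prior $\mu$ and interchange integrals using Fubini's theorem (justified by condition (ii) and positivity of the integrand) to obtain
\begin{equation*}
    \E\left[\int_{\parspace}\exp\left(\lambda \sum_{i=1}^{n}\phi(V_{i},w)-\frac{n\lambda^{2}\eta^{2}(w)}{8}\right)\mu(\diff w)\right]\le 1.
\end{equation*}
A direct application of Markov's inequality then gives that, with probability at least $1-\delta$,
\begin{equation*}
    \int_{\parspace}\exp\left(\lambda \sum_{i=1}^{n}\phi(V_{i},w)-\frac{n\lambda^{2}\eta^{2}(w)}{8}\right)\mu(\diff w)\le \delta^{-1}.
\end{equation*}

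Finally, I would invoke the Donsker--Varadhan change-of-measure inequality: for any measurable $g$ and any probability measure $\rho$ absolutely continuous with respect to $\mu$,
\begin{equation*}
    \int_{\parspace} g(w)\,\rho(\diff w) \le \RE(\rho\|\mu) + \log \int_{\parspace} \exp(g(w))\,\mu(\diff w).
\end{equation*}
Applying this with $g(w) = \lambda\sum_{i}\phi(V_{i},w)-n\lambda^{2}\eta^{2}(w)/8$, combining with the previous high-probability bound, rearranging, and dividing by $\lambda>0$ yields exactly the claimed inequality uniformly over all $\rho$ (the case $\rho \not\ll \mu$ is handled by $\RE(\rho\|\mu)=+\infty$, making the bound vacuous).

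The main obstacle is essentially bookkeeping: verifying that the integrand is jointly measurable so Fubini applies, and handling the pointwise-in-$w$ nature of condition (i) — one has to be careful that the Hoeffding bound holds for every $w$ rather than only $\mu$-almost everywhere, so that the subsequent $\mu$-integration is valid. Once these measurability issues are dispatched, the argument is the familiar Catoni-style PAC-Bayes derivation and requires no further ideas.
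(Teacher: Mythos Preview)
Your proposal is correct and follows essentially the same Catoni-style PAC-Bayes argument as the paper: exponential moment bound via (i) and independence, integration against the prior with Fubini, Markov's inequality, and the Donsker--Varadhan variational formula. The only cosmetic difference is that the paper applies the variational representation \emph{before} Markov (introducing a truncation $\min\{b,h(w)\}$ to sidestep integrability issues) whereas you apply Markov first and then Donsker--Varadhan; the two routes are equivalent.
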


\begin{proof}
    We obtain the following fact via a variational representation \citep[see Section 5.2 of][]{catoni2004statistical}: for any measurable function $h:\parspace\to\R$,
    \begin{equation}
        \sup_{b\in\R}\log\left(\int_{\parspace}\exp\left(\min\left\{b,h\left(w\right)\right\}\right)\mu\left(\diff w\right)\right)=\sup_{\rho}\left(\sup_{b\in\R}\int_{\parspace}\min\left\{b,h\left(w\right)\right\}\rho\left(\diff w\right)-\RE\left(\rho\|\mu\right)\right),
    \end{equation}
    where the supremum in $\rho$ is taken over all the probability measures on $(\parspace,\paralgebra)$ and both the left-hand side and the right-hand side can be infinite (we let $\infty-\infty=-\infty$ as \citealp{catoni2004statistical}).
    The variational representation, Fubini's theorem, and Condition (i) give that for all $\lambda\in\R$,
    \begin{align*}
        &\E_{V}\exp\sup_{\rho}\left(\E_{W\sim\rho}\left[\lambda\sum_{i=1}^{n}\phi\left(V_{i},W\right)- \frac{n\lambda^{2}\eta^{2}(W)}{8}\right]-\RE\left(\rho\|\mu\right)\right)\\
        &=\E_{V}\exp\sup_{\rho}\left(\sup_{b\in\R}\E_{W\sim\rho}\min\left\{b,\lambda\sum_{i=1}^{n}\phi\left(V_{i},W\right)- \frac{n\lambda^{2}\eta^{2}(W)}{8}\right\}-\RE\left(\rho\|\mu\right)\right)\\
        &=\E_{V}\exp\sup_{b\in\R}\log\E_{W\sim\mu}\exp\min\left\{b,\lambda\sum_{i=1}^{n}\phi\left(V_{i},W\right)- \frac{n\lambda^{2}\eta^{2}(W)}{8}\right\}\\
        &=\E_{V}\sup_{b\in\R}\E_{W\sim\mu}\exp\min\left\{b,\lambda \sum_{i=1}^{n}\phi\left(V_{i},W\right)- \frac{n\lambda^{2}\eta^{2}(W)}{8}\right\}\\
        &=\E_{V}\E_{W\sim\mu}\exp\left(\lambda\sum_{i=1}^{n}\phi\left(V_{i},W\right)- \frac{n\lambda^{2}\eta^{2}(W)}{8}\right)\\
        &=\E_{W\sim\mu}\E_{V}\exp\left(\lambda\sum_{i=1}^{n}\phi\left(V_{i},W\right)- \frac{n\lambda^{2}\eta^{2}(W)}{8}\right)\\
        &\le \E_{W\sim\mu}\exp\left(\frac{n\lambda^{2}\eta^{2}(W)}{8}- \frac{n\lambda^{2}\eta^{2}(W)}{8}\right)\\
        &=1.
    \end{align*}
    Therefore, by setting a random variable $\xi$ such that
    \begin{equation*}
        \xi:=\sup_{\rho}\left(\E_{W\sim\rho}\left[\lambda\sum_{i=1}^{n}\phi\left(V_{i},W\right)- \frac{n\lambda^{2}\eta^{2}(W)}{8}\right]-\RE\left(\rho\|\mu\right)\right),
    \end{equation*}
    Markov's inequality yields that $\Pr(\xi\ge \log\delta^{-1})\le \delta\E \exp(\xi)\le \delta$; this is the desired conclusion.
\end{proof}

\begin{proof}[Proof of Proposition \ref{prop:pacbm}]
    We give only the first statement since the second one is quite parallel by flipping the sign of $f$.
    We also consider only the case where $\sup_{\btheta\in\PS}\E[\eta^{2}(\BM_{t}^{\btheta})]>0$; otherwise the proof is trivial.
    It\^{o}'s formula gives that for all $\btheta\in\PS$,
    \begin{equation}\label{eq:lem:bm:ito}
        \E_{\BM}\left[f\left(V_{i},\BM_{t}^{\btheta}\right)\right]=f(V_{i},\btheta)+\frac{1}{2}\int_{0}^{t}\E_{\BM}\left[\left.\Delta_{\bw}f\left(V_{i},\bw\right)\right|_{\bw=\BM_{t}^{\btheta}}\right]\diff t
    \end{equation}
    almost surely since the local martingale term is martingale by (i)-(a), and the interchange of expectations holds by (i)-(b).
    We have
    \begin{equation}
        \RE\left(\BM_{t}^{\btheta}\left\|\BM_{t}^{\zero}\right.\right)=\frac{1}{2t}\left\|\btheta\right\|^{2}\le \frac{R^{2}}{2t}
    \end{equation}
    (we use the law of $\BM_{t}^{\zero}=\BM_{t}$ as the ``prior'' and those of $\BM_{t}^{\btheta}$ with $\btheta\in\PS$ as the ``posteriors'').
    By Lemma \ref{lem:arc16} (Condition (ii) of Lemma \ref{lem:arc16} is satisfied by Condition (ii) of the proposition), for any $t>0$ and $\lambda>0$, with probability at least $1-\delta$, for all $\btheta\in\PS$,
    \begin{align}
        \sum_{i=1}^{n}\left(\E_{V,\BM}\left[f\left(V_{i},\BM_{t}^{\btheta}\right)\right]-\E_{\BM}\left[f\left(V_{i},\BM_{t}^{\btheta}\right)\right]\right)
        &\le \frac{n\lambda\sup_{\btheta\in\PS}\E[\eta^{2}(\BM_{t}^{\btheta})]}{8}+\frac{\frac{R^{2}}{2t}+\log\delta^{-1}}{\lambda}
    \end{align}
    Combining this inequality with Eq.~\eqref{eq:lem:bm:ito}, dividing by $n$, and fixing 
    \begin{equation*}
        \lambda=\sqrt{\frac{8(R^{2}/(2t)+\log\delta^{-1})}{n\sup_{\btheta\in\PS}\E[\eta^{2}(\BM_{t}^{\btheta})]}},
    \end{equation*}
    we obtain the conclusion.
\end{proof}

\subsection{Proof of Proposition \ref{prop:bernstein}}

We prove several preliminary lemmas for Proposition \ref{prop:bernstein}.

\begin{lemma}\label{lem:localLipschitz}
    Suppose that Assumption \ref{assumption:1} holds.
    It holds that for all $t>0$, $\btheta\in\PS$, and $\bz=(\bz_{1},\ldots,\bz_{n})\in\R^{np}$,
    \begin{equation}
        \left\|\nabla_{\bz}G_{t}^{\btheta}(\bz)\right\|
        \le \sqrt{\frac{2t\left\|\bSigma\right\|}{\pi n}}\left(4+\sqrt{\frac{R^{2}}{16n}\sum_{i=1}^{n}\left\langle \bLambda\bz_{i},\bz_{i}\right\rangle}\right).
    \end{equation}
\end{lemma}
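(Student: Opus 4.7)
My plan is to exploit the product structure of $G_t^\btheta(\bz)$ and reduce the proof to bounding a handful of scalar Gaussian expectations via Stein's lemma. Only the first sum in $G_t^\btheta(\bz)$ depends on $\bz$, and it splits as $\frac{1}{2n}\sum_{i=1}^n g_i(\bz_i)$ with $g_i(\bz_i):=\int_0^t \E_\BM[h(\xi_s^i)]\,q_i\,\diff s$, where $h:=\sigma(1-\sigma)$, $\xi_s^i:=\langle\tilde{\BM}_s^\btheta,\bz_i\rangle$, and $q_i:=\langle\bLambda\bz_i,\bz_i\rangle$. Since $\nabla_{\bz_i}\xi_s^i=\tilde{\BM}_s^\btheta$, the product rule yields
\[
\nabla_{\bz_i}g_i(\bz_i)=q_i\int_0^t \E_\BM\bigl[h'(\xi_s^i)\,\tilde{\BM}_s^\btheta\bigr]\diff s+2\bLambda\bz_i\int_0^t \E_\BM[h(\xi_s^i)]\,\diff s.
\]
Because $(\tilde{\BM}_s^\btheta,\xi_s^i)$ is jointly Gaussian with $\Cov(\tilde{\BM}_s^\btheta,\xi_s^i)=s\bLambda\bz_i$, the multivariate Stein identity splits the vector-valued integrand as $\E_\BM[h'(\xi_s^i)\tilde{\BM}_s^\btheta]=\bLambda^{1/2}\bfU^\top\btheta\,\E_\BM[h'(\xi_s^i)]+s\bLambda\bz_i\,\E_\BM[h''(\xi_s^i)]$.

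The decisive step is an integration by parts in $s$. Itô's formula applied to $h(\xi_s^i)$, together with $\diff\langle\xi^i\rangle_s=q_i\,\diff s$, yields $\partial_s\E_\BM[h(\xi_s^i)]=\tfrac{q_i}{2}\E_\BM[h''(\xi_s^i)]$, so
\[
q_i\int_0^t s\,\E_\BM[h''(\xi_s^i)]\,\diff s=2t\,\E_\BM[h(\xi_t^i)]-2\int_0^t \E_\BM[h(\xi_s^i)]\,\diff s.
\]
The trailing integral here exactly cancels the last term of the gradient formula from the previous paragraph, so that
\[
\nabla_{\bz_i}g_i(\bz_i)=q_i\,\bLambda^{1/2}\bfU^\top\btheta\int_0^t \E_\BM[h'(\xi_s^i)]\,\diff s+2t\,\bLambda\bz_i\,\E_\BM[h(\xi_t^i)].
\]

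For the remaining scalar expectations I invoke Stein once more. With $m_i:=\langle\bLambda^{1/2}\bfU^\top\btheta,\bz_i\rangle$ and $\tau_s:=\sqrt{sq_i}$, one has $\E_\BM[h(\xi_s^i)]=\E_\BM[(\xi_s^i-m_i)\sigma(\xi_s^i)]/\tau_s^2$ and $\E_\BM[h'(\xi_s^i)]=\E_\BM[(\xi_s^i-m_i)h(\xi_s^i)]/\tau_s^2$; combined with $|\sigma|\le 1$, $|h|\le 1/4$, and $\E|\xi_s^i-m_i|=\tau_s\sqrt{2/\pi}$ these yield $|\E_\BM[h(\xi_s^i)]|\le \sqrt{2/\pi}/\tau_s$ and $|\E_\BM[h'(\xi_s^i)]|\le \sqrt{2/\pi}/(4\tau_s)$. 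Using $\|\bLambda^{1/2}\bfU^\top\btheta\|\le R\|\bSigma\|^{1/2}$, $\|\bLambda\bz_i\|\le \|\bSigma\|^{1/2}\sqrt{q_i}$, and $\int_0^t s^{-1/2}\diff s=2\sqrt{t}$ gives
\[
\|\nabla_{\bz_i}g_i(\bz_i)\|\le \frac{R}{2}\sqrt{\frac{2t\|\bSigma\|q_i}{\pi}}+2\sqrt{\frac{2t\|\bSigma\|}{\pi}}.
\]
Finally, from $\|\nabla_{\bz}G_t^\btheta(\bz)\|^2=\frac{1}{4n^2}\sum_{i=1}^n\|\nabla_{\bz_i}g_i(\bz_i)\|^2$ and Minkowski's inequality $\sqrt{\sum_i(a_i+b)^2}\le \sqrt{\sum_i a_i^2}+b\sqrt{n}$, the stated estimate follows after relaxing the leading constant $1$ to the stated $4$.

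The main technical obstacle is spotting the integration-by-parts cancellation in the second paragraph: a bound based on $|\E_\BM[h''(\xi_s^i)]|$ alone generates an $\int_0^t s\,|\E_\BM[h''(\xi_s^i)]|\,\diff s$ of order $t^{3/2}$, which is incompatible with the $\sqrt{t}$-scaling required uniformly in $t>0$. Reassembling the two $h''$- and $h$-integrals into the single residue $2t\bLambda\bz_i\E_\BM[h(\xi_t^i)]$ is what allows every constant to close correctly.
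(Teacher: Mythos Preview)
Your proof is correct and genuinely different from the paper's. The paper bounds $\langle\nabla_{\bz}G_t^\btheta,\bu\rangle$ for a unit vector $\bu$ by writing $\E_{\BM}[h'(\xi_s^i)\langle\tilde{\BM}_s^\btheta,\bu_i\rangle]$ via the conditional Gaussian decomposition of $\langle\tilde{\BM}_s^\btheta,\bu_i\rangle$ on $\xi_s^i$, which produces three terms; the third of these involves $\E[h'(\mu_i+\lambda_{s,i}\zeta)\lambda_{s,i}\zeta]$ and is handled through the Hermite polynomial $H_3(\zeta)=\zeta^3-3\zeta$ and the numerical bound $\E|\zeta^3-3\zeta|<2\sqrt{2/\pi}$. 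Summing the three contributions yields the leading constant $2+2=4$ in front of the $\sqrt{2t\|\bSigma\|/(\pi n)}$ factor.

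Your integration-by-parts identity $q_i\int_0^t s\,\E[h''(\xi_s^i)]\,\diff s = 2t\,\E[h(\xi_t^i)] - 2\int_0^t \E[h(\xi_s^i)]\,\diff s$ is the sharper device: it exactly cancels the derivative of the $q_i$-factor against the $h''$-piece, leaving only the residue $2t\bLambda\bz_i\,\E[h(\xi_t^i)]$ in place of the paper's Terms~1 and~3 combined. This buys you the constant $1$ rather than $4$, and avoids the $H_3$ computation entirely. The remaining Stein bounds on $\E[h(\xi_s^i)]$ and $\E[h'(\xi_s^i)]$ and the final Minkowski aggregation are the same in spirit as the paper's Steps~1--2. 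One small remark: both arguments implicitly assume $q_i=\langle\bLambda\bz_i,\bz_i\rangle>0$ when dividing by $\tau_s$; in the degenerate case $q_i=0$ one has $\bLambda\bz_i=0$ and $\nabla_{\bz_i}g_i=0$, so the bound holds trivially.
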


\begin{proof}
In this proof, we frequently use the notation $\zeta\sim\Gaussian(0,1)$, $\mu_{i}=\langle \btheta,\bfU\bLambda^{1/2}\bz_{i}\rangle$, and $\lambda_{s,i}=\sqrt{s\langle \bLambda\bz_{i},\bz_{i}\rangle}$ for all $s\in[0,t]$,
and the following fact:
\begin{equation}
    \left[\begin{matrix}
        \left\langle\tilde{\BM}_{s}^{\btheta},\bz_{i}\right\rangle\\
        \left\langle\tilde{\BM}_{s}^{\btheta},\bu_{i}\right\rangle
    \end{matrix}\right]\sim\Gaussian\left(\left[\begin{matrix}
        \left\langle \btheta,\bfU\bLambda^{1/2}\bz_{i}\right\rangle\\
        \left\langle \btheta,\bfU\bLambda^{1/2}\bu_{i}\right\rangle
    \end{matrix}\right],
    s\left[\begin{matrix}
        \left\langle \bLambda\bz_{i},\bz_{i}\right\rangle & \left\langle \bLambda\bz_{i},\bu_{i}\right\rangle \\
        \left\langle \bLambda\bz_{i},\bu_{i}\right\rangle & \left\langle \bLambda\bu_{i},\bu_{i}\right\rangle
    \end{matrix}\right]\right).
\end{equation}
We choose arbitrary $\bu=(\bu_{1},\ldots,\bu_{n})\in\bbS^{np-1}$ and give a uniform bound for
\begin{align}
    \left\langle \nabla_{\bz}G_{t}^{\btheta}(\bz),\bu\right\rangle&=\frac{1}{n}\sum_{i=1}^{n}\left\langle \bLambda\bz_{i},\bu_{i}\right\rangle\int_{0}^{t}\E\left[\sigma\left(1-\sigma\right)\left(\left\langle \tilde{\BM}_{s}^{\btheta},\bz_{i}\right\rangle\right)\right]\diff s\notag\\
    &\quad+\sum_{i=1}^{n}\frac{1}{2n}\left\langle \bLambda\bz_{i},\bz_{i}\right\rangle \int_{0}^{t}\E\left[\left(\sigma\left(1-\sigma\right)\right)'\left(\left\langle \tilde{\BM}_{s}^{\btheta},\bz_{i}\right\rangle\right)\left\langle \tilde{\BM}_{s}^{\btheta},\bu_{i}\right\rangle\right]\diff s\notag\\
    &=\frac{1}{n}\sum_{i=1}^{n}\left\langle \bLambda\bz_{i},\bu_{i}\right\rangle\int_{0}^{t}\E\left[\sigma\left(1-\sigma\right)\left(\left\langle \tilde{\BM}_{s}^{\btheta},\bz_{i}\right\rangle\right)\right]\diff s\notag\\
    &\quad+\sum_{i=1}^{n}\frac{1}{2n}\left\langle \bLambda\bz_{i},\bz_{i}\right\rangle \left\langle \btheta,\bfU\bLambda^{1/2}\bu_{i}\right\rangle\int_{0}^{t}\E\left[\left(\sigma\left(1-\sigma\right)\right)'\left(\left\langle \tilde{\BM}_{s}^{\btheta},\bz_{i}\right\rangle\right)\right]\diff s\notag\\
    &\quad+\sum_{i=1}^{n}\frac{1}{2n}\left\langle \bLambda\bz_{i},\bu_{i}\right\rangle \int_{0}^{t}\E\left[\left(\sigma\left(1-\sigma\right)\right)'\left(\left\langle \tilde{\BM}_{s}^{\btheta},\bz_{i}\right\rangle\right)\left(\left\langle\tilde{\BM}_{s}^{\btheta},\bz_{i}\right\rangle-\left\langle \btheta,\bfU\bLambda^{1/2}\bz_{i}\right\rangle\right)\right]\diff s\notag\\
    &=\frac{1}{n}\sum_{i=1}^{n}\left\langle \bLambda\bz_{i},\bu_{i}\right\rangle\int_{0}^{t}\E\left[\sigma\left(1-\sigma\right)\left(\mu_{i}+\lambda_{s,i}\zeta\right)\right]\diff s\notag\\
    &\quad+\sum_{i=1}^{n}\frac{1}{2n}\left\langle \bLambda\bz_{i},\bz_{i}\right\rangle \left\langle \btheta,\bfU\bLambda^{1/2}\bu_{i}\right\rangle\int_{0}^{t}\E\left[\left(\sigma\left(1-\sigma\right)\right)'\left(\mu_{i}+\lambda_{s,i}\zeta\right)\right]\diff s\notag\\
    &\quad+\sum_{i=1}^{n}\frac{1}{2n}\left\langle \bLambda\bz_{i},\bu_{i}\right\rangle \int_{0}^{t}\E\left[\left(\sigma\left(1-\sigma\right)\right)'\left(\mu_{i}+\lambda_{s,i}\zeta\right)\lambda_{s,i}\zeta\right]\diff s\label{eq:proof:lem:localLipschitz:decomp};
\end{align}
the interchange of limiting operations in the first equality holds since $\|(\sigma(1-\sigma))'\|_{\infty}\|\tilde{\BM}_{s}^{\btheta}\|$ dominates the integrand, and the second equality holds since for any $s\in(0,t]$,
\begin{align*}
    &\E\left[\left(\sigma\left(1-\sigma\right)\right)'\left(\left\langle \tilde{\BM}_{s}^{\btheta},\bz_{i}\right\rangle\right)\left\langle \tilde{\BM}_{s}^{\btheta},\bu_{i}\right\rangle\right]\\
    &=\E\left[\left(\sigma\left(1-\sigma\right)\right)'\left(\left\langle \tilde{\BM}_{s}^{\btheta},\bz_{i}\right\rangle\right)\left(\left\langle \btheta,\bfU\bLambda^{1/2}\bu_{i}\right\rangle+\frac{\left\langle \bLambda\bz_{i},\bu_{i}\right\rangle}{\left\langle \bLambda\bz_{i},\bz_{i}\right\rangle}\left(\left\langle\tilde{\BM}_{s}^{\btheta},\bz_{i}\right\rangle-\left\langle \btheta,\bfU\bLambda^{1/2}\bz_{i}\right\rangle\right)\right)\right].
\end{align*}
We use the following fact repetitively: for any $\{a_{1},\ldots,a_{n}\}\subset\R^{n}$, $\{b_{1},\ldots,b_{n}\}\subset[0,\infty)^{n}$ with $|a_{i}|\le b_{i}$, $\bv=(\bv_{1},\ldots,\bv_{n})\in\R^{np}$ and $\bu=(\bu_{1},\ldots,\bu_{n})\in\bbS^{np-1}$,
by choosing $\bar{\bu}=(\bar{\bu}_{1},\ldots,\bar{\bu}_{n})=(\sgn(\langle \bv_{1},\bu_{1}\rangle)\bu_{1},\ldots,\sgn(\langle \bv_{n},\bu_{n}\rangle)\bu_{n})\in\bbS^{np-1}$,
\begin{align}\label{eq:proof:lem:localLipschitz:variational}
    \sum_{i=1}^{n}a_{i}\langle \bu_{i},\bv_{i}\rangle \le \sum_{i=1}^{n}|a_{i}\langle \bu_{i},\bv_{i}\rangle|\le\sum_{i=1}^{n}b_{i}\langle \bar{\bu}_{i},\bv_{i}\rangle \le \sup_{\tilde{\bu}\in\bbS^{np-1}}\sum_{i=1}^{n}b_{i}\langle \tilde{\bu}_{i},\bv_{i}\rangle=\sqrt{\sum_{i=1}^{n}b_{i}^{2}\|\bv_{i}\|^{2}}.
\end{align}

(Step 1) We give a bound for the first term on the most right-hand side of \eqref{eq:proof:lem:localLipschitz:decomp}.
Stein's lemma. the fact $\sigma'=\sigma(1-\sigma)$, H\"{o}lder's inequality, $\|\sigma\|_{\infty}\le 1$, and $\E[|\zeta|]=\sqrt{2/\pi}$ give that
\begin{align*}
    \left|\E\left[\sigma\left(1-\sigma\right)\left(\mu_{i}+\lambda_{s,i}\zeta\right)\right]\right|&=\left|\E\left[\sigma'\left(\mu_{i}+\lambda_{s,i}\zeta\right)\right]\right|=\frac{1}{\lambda_{s,i}}\left|\E\left[\partial_{\zeta}\sigma\left(\mu_{i}+\lambda_{s,i}\zeta\right)\right]\right|\\
    &=\frac{1}{\lambda_{s,i}}\left|\E\left[\sigma\left(\mu_{i}+\lambda_{s,i}\zeta\right)\zeta\right]\right|\le \frac{1}{\lambda_{s,i}}\sqrt{\frac{2}{\pi}}.
\end{align*}
Therefore, for some $\bu'=(\bu_{1}',\ldots,\bu_{n}')\in\bbS^{np-1}$,
\begin{align*}
    \frac{1}{n}\sum_{i=1}^{n}\left\langle \bLambda\bz_{i},\bu_{i}\right\rangle\int_{0}^{t}\E\left[\sigma\left(1-\sigma\right)\left(\mu_{i}+\lambda_{s,i}\zeta\right)\right]\diff s
    &= \frac{1}{n}\sum_{i=1}^{n}\left\langle \bLambda\bz_{i},\bu_{i}\right\rangle\int_{0}^{t}\frac{1}{\lambda_{s,i}}\E\left[\sigma\left(\mu_{i}+\lambda_{s,i}\zeta\right)\zeta\right]\diff s\\
    &\le \frac{\sqrt{2/\pi}}{n}\sum_{i=1}^{n}\diff s\left\langle \bLambda\bz_{i},\bu_{i}'\right\rangle\int_{0}^{t}\frac{1}{\sqrt{s\langle \bLambda\bz_{i},\bz_{i}\rangle}}\diff s\\
    &\le \frac{2\sqrt{2/\pi}}{n}\sum_{i=1}^{n}\frac{\sqrt{t}}{\sqrt{\left\langle\bLambda\bz_{i},\bz_{i}\right\rangle}}\diff s\left\langle \bLambda\bz_{i},\bu_{i}'\right\rangle\\
    &\le 2\sqrt{\frac{2t}{\pi n}}\sqrt{\frac{1}{n}\sum_{i=1}^{n}\left\|\frac{1}{\sqrt{\left\langle\bLambda\bz_{i},\bz_{i}\right\rangle}}\bLambda\bz_{i}\right\|^{2}}\\
    &\le 2\sqrt{\frac{2t\left\|\bSigma\right\|}{\pi n}},
\end{align*}
where the second inequality is by the fact \eqref{eq:proof:lem:localLipschitz:variational}.

(Step 2) In the next place, we give an upper bound for the second term on the most right-hand side of \eqref{eq:proof:lem:localLipschitz:decomp}.
Stein's lemma yields that
\begin{align*}
    \left|\E\left[\left(\sigma\left(1-\sigma\right)\right)'\left(\mu_{i}+\lambda_{s,i}\zeta\right)\right]\right|
    &=\left|\frac{1}{\lambda_{s,i}}\E\left[\lambda_{s,i}\left(\sigma\left(1-\sigma\right)\right)'\left(\mu_{i}+\lambda_{s,i}\zeta\right)\right]\right|\\
    &=\frac{1}{\lambda_{s,i}}\left|\E\left[\partial_{\zeta}\left(\sigma\left(1-\sigma\right)\right)\left(\mu_{i}+\lambda_{s,i}\zeta\right)\right]\right|\\
    &=\frac{1}{\lambda_{s,i}}\left|\E\left[\left(\sigma\left(1-\sigma\right)\right)\left(\mu_{i}+\lambda_{s,i}\zeta\right)\zeta\right]\right|\\
    &\le \frac{\sqrt{2/\pi}}{4\lambda_{s,i}},
\end{align*}
where the last inequality uses H\"{o}lder's inequality and $\|\sigma(1-\sigma)\|_{\infty}\le 1/4$.
For some $\bu''=(\bu_{1}'',\ldots,\bu_{n}'')\in\bbS^{np-1}$, by the fact \eqref{eq:proof:lem:localLipschitz:variational},
\begin{align*}
    &\sum_{i=1}^{n}\frac{1}{2n}\left\langle \bLambda\bz_{i},\bz_{i}\right\rangle \left\langle \btheta,\bfU\bLambda^{1/2}\bu_{i}\right\rangle\int_{0}^{t}\E\left[\left(\sigma\left(1-\sigma\right)\right)'\left(\mu_{i}+\lambda_{s,i}\zeta\right)\right]\diff s\\
    &\le \sum_{i=1}^{n}\frac{1}{2n}\left\langle \bLambda\bz_{i},\bz_{i}\right\rangle \left\langle \btheta,\bfU\bLambda^{1/2}\bu_{i}''\right\rangle\int_{0}^{t}\frac{\sqrt{2/\pi}}{4\sqrt{s\left\langle\bLambda\bz_{i},\bz_{i}\right\rangle}}\diff s\\
    &\le \sum_{i=1}^{n}\frac{1}{2n}\left\langle \bLambda\bz_{i},\bz_{i}\right\rangle\frac{\sqrt{2t/\pi}}{2\sqrt{\left\langle\bLambda\bz_{i},\bz_{i}\right\rangle}}\left\langle \btheta,\bfU\bLambda^{1/2}\bu_{i}''\right\rangle\\
    &= \frac{\sqrt{2t/\pi}}{4n}\sum_{i=1}^{n}\left\langle\sqrt{\left\langle \bLambda\bz_{i},\bz_{i}\right\rangle} \bLambda^{1/2}\bfU^{\top}\btheta,\bu_{i}''\right\rangle\\
    &\le \frac{\sqrt{2t/\pi}}{4n}\left\|\left(\sqrt{\left\langle \bLambda\bz_{1},\bz_{1}\right\rangle}\bLambda^{1/2}\bfU^{\top}\btheta,\ldots,\sqrt{\left\langle \bLambda\bz_{n},\bz_{n}\right\rangle}\bLambda^{1/2}\bfU^{\top}\btheta\right)\right\|\\
    &=\frac{\sqrt{2t/\pi}}{4n}\sqrt{\sum_{i=1}^{n}\left\|\sqrt{\left\langle \bLambda\bz_{i},\bz_{i}\right\rangle}\bLambda^{1/2}\bfU^{\top}\btheta\right\|^{2}}\\
    &= \frac{\sqrt{2t/\pi}}{4n}\sqrt{\sum_{i=1}^{n}\left\langle \bLambda\bz_{i},\bz_{i}\right\rangle\left\|\bLambda^{1/2}\bfU^{\top}\btheta\right\|^{2}}\\
    &\le \sqrt{\frac{2t\left\|\bSigma\right\|}{\pi n}}\sqrt{\frac{R^{2}}{16n}\sum_{i=1}^{n}\left\langle \bLambda\bz_{i},\bz_{i}\right\rangle}.
\end{align*}

(Step 3) We examine the third term on the most right-hand side of \eqref{eq:proof:lem:localLipschitz:decomp}.
It holds that
\begin{align*}
    \left|\E\left[\left(\sigma\left(1-\sigma\right)\right)'\left(\mu_{i}+\lambda_{s,i}\zeta\right)\lambda_{s,i}\zeta\right]\right|
    &=\frac{1}{\lambda_{s,i}}\left|\E\left[\left(\partial_{\zeta}^{2}\sigma\left(\mu_{i}+\lambda_{s,i}\zeta\right)\right)\zeta\right]\right|\\
    &=\frac{1}{\lambda_{s,i}}\left|\E\left[\sigma\left(\mu_{i}+\lambda_{s,i}\zeta\right)\left(\zeta^{3}-3\zeta\right)\right]\right|\\
    &\le \frac{1}{\lambda_{s,i}}\E\left[\left|\zeta^{3}-3\zeta\right|\right]\\
    &\le \frac{2\sqrt{2/\pi}}{\lambda_{s,i}},
\end{align*}
where the second equality is by Lemma \ref{lem:hermite}-(ii) with $d=1$, the third line is by H\"{o}lder's inequality, and the last inequality holds by $\E[|\zeta^{3}-3\zeta|]=(1+4e^{-3/2})\sqrt{2/\pi}< 2\sqrt{2/\pi}$.
By the fact \eqref{eq:proof:lem:localLipschitz:variational}, for some $\bu'''=(\bu_{1}''',\ldots,\bu_{n}''')\in\bbS^{np-1}$,
\begin{align*}
    \sum_{i=1}^{n}\frac{1}{2n}\left\langle \bLambda\bz_{i},\bu_{i}\right\rangle \int_{0}^{t}\E\left[\left(\sigma\left(1-\sigma\right)\right)'\left(\mu_{i}+\lambda_{s,i}\zeta\right)\lambda_{s,i}\zeta\right]\diff s
    &\le 2\sum_{i=1}^{n}\frac{\sqrt{2t/\pi}}{n}\left\langle \frac{1}{\sqrt{\left\langle\bLambda\bz_{i},\bz_{i}\right\rangle}}\bLambda\bz_{i},\bu_{i}'''\right\rangle\\
    &=\frac{2\sqrt{2t/\pi}}{n}\sqrt{\sum_{i=1}^{n}\left\|\frac{1}{\sqrt{\left\langle \bLambda\bz_{i},\bz_{i}\right\rangle}}\bLambda\bz_{i}\right\|^{2}}\\
    &\le \frac{2\sqrt{2t/\pi}}{n}\sqrt{\sum_{i=1}^{n}\left(\frac{1}{\left\langle \bLambda\bz_{i},\bz_{i}\right\rangle}\left\|\bLambda\bz_{i}\right\|^{2}\right)}\\
    &\le 2\sqrt{\frac{2t\|\bSigma\|}{\pi n}}.
\end{align*}

(Step 4) Therefore, for all $\bu\in\bbS^{np-1}$,
\begin{align}
    \left\langle \nabla_{\bz}G_{t}^{\btheta}(\bz),\bu\right\rangle
    \le \sqrt{\frac{2t\left\|\bSigma\right\|}{\pi n}}\left(4+\sqrt{\frac{R^{2}}{16n}\sum_{i=1}^{n}\left\langle \bLambda\bz_{i},\bz_{i}\right\rangle}\right).
\end{align}
This is the desired conclusion.
\end{proof}

The next lemma is a version of Lemmas 3.1 and 3.2 of \citet{adamczak2015note}.

\begin{lemma}\label{lem:tech:adamczak}
    Let $\{S_{v};v>0\}$ be a sequence of real-valued integrable random variables, $Z$ be a real-valued integrable random variable, and $a,b>0$ such that for all $v>0$ and $u>0$,
    \begin{equation*}
        \Pr\left(|S_{v}-\E[S_{v}]|\ge s\right)\le 2\exp\left(-\frac{u^{2}}{a+\sqrt{bv}}\right)
    \end{equation*}
    and 
    \begin{equation*}
        \Pr\left(S_{v}\neq Z\right)\le 2\exp\left(-v/b\right).
    \end{equation*}
    Then for any $v>0$,
    \begin{equation}
        \Pr\left(|Z-\E[Z]|\ge v\right)\le 2\exp\left(-\frac{1}{6}\min\left\{\frac{v^{2}}{(44a)^{2}},\frac{v}{22b}\right\}\right).
    \end{equation}
\end{lemma}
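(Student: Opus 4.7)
The plan is to transfer concentration from the approximating family $\{S_v\}$ to $Z$ by a standard three-way truncation decomposition. For a fixed deviation level $v>0$, I would introduce a truncation parameter $v_0>0$ (to be optimized at the end, essentially $v_0 \asymp v$) and write
\begin{equation*}
    |Z - \E[Z]| \le |Z - S_{v_0}| + |S_{v_0} - \E[S_{v_0}]| + |\E[S_{v_0}] - \E[Z]|,
\end{equation*}
so that $\{|Z - \E[Z]| \ge v\}$ forces at least one summand to exceed $v/3$. The first event is contained in $\{S_{v_0} \ne Z\}$ and thus has probability at most $2e^{-v_0/b}$ by the second hypothesis. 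The second event is controlled by the first hypothesis with $u = v/3$, giving at most $2\exp(-(v/3)^2/(a+\sqrt{bv_0}))$. The third term is deterministic, so the task reduces to ensuring $|\E[S_{v_0}] - \E[Z]| \le v/3$.

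The key technical step, and the hardest part of the argument, is the bound on the mean gap. I would estimate $|\E[S_{v_0}] - \E[Z]| \le \E|S_{v_0} - Z|$ and apply layer cake:
\begin{equation*}
    \E|S_{v_0} - Z| = \int_0^\infty \Pr\bigl(|S_{v_0} - Z| > t\bigr)\,\diff t.
\end{equation*}
Since $\{|S_{v_0} - Z| > t\} \subseteq \{S_{v_0} \ne Z\}$ for every $t>0$, each contribution to the integral lies inside the low-probability event $\{S_{v_0}\neq Z\}$ of mass $\le 2e^{-v_0/b}$. Splitting the integrand at a threshold $T$ proportional to $\sqrt{v_0}$, I would use on $\{t \le T\}$ the uniform truncation bound, and on $\{t > T\}$ the decomposition $\{|S_{v_0}-Z|>t\} \subseteq \{|S_{v_0}|>t/2\} \cup \{|Z|>t/2\}$, combined with the subgaussian-type tail of $S_{v_0}$ (for the first piece) and a bootstrapped tail bound on $Z$ derived from applying the argument at an earlier stage (for the second piece). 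This yields $\E|S_{v_0} - Z| \lesssim \sqrt{a + \sqrt{bv_0}}\,e^{-v_0/(2b)}$, which is $\le v/3$ provided $v_0$ is large enough relative to $v$.

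Finally, I would optimize $v_0$ to balance the three contributions. Setting $v_0 = \alpha v$ with a suitable absolute constant $\alpha$ makes the subgaussian term in $|S_{v_0}-\E S_{v_0}|$ dominate when $\sqrt{bv} \lesssim a$ (producing the $v^2/(44a)^2$ branch of the minimum) and makes the truncation term $2e^{-v_0/b}$ dominate when $\sqrt{bv} \gtrsim a$ (producing the $v/(22b)$ branch). Absorbing the $1/3$ factor from the decomposition, the $1/9$ factor coming from squaring $v/3$, and the $1/2$ factor in the bootstrap step yields the explicit constants $44$ and $22$ together with the $1/6$ in the exponent. The delicate point is the bootstrap used to control the $Z$-tail in the second regime: we control $\Pr(|Z| > t/2)$ by a weaker version of the very inequality we are proving (applied at the previous value of $v$ or along a doubling sequence), which is legitimate because $Z$ is integrable and the bound only needs to hold for sufficiently large $t$.
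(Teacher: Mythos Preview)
Your three–term split and the handling of the first two pieces are fine, but the control of the deterministic gap $|\E[S_{v_0}]-\E[Z]|$ does not go through as written, and this is the heart of the lemma.

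You propose to bound $\E|S_{v_0}-Z|$ by layer cake, using on $\{t>T\}$ the inclusion $\{|S_{v_0}-Z|>t\}\subseteq\{|S_{v_0}|>t/2\}\cup\{|Z|>t/2\}$ together with ``the subgaussian-type tail of $S_{v_0}$'' and a ``bootstrapped tail bound on $Z$''. Both ingredients are circular. The hypothesis gives a tail for $S_{v_0}-\E[S_{v_0}]$, not for $S_{v_0}$; converting one to the other needs $|\E[S_{v_0}]|$, which (after the harmless normalization $\E[Z]=0$) is exactly the quantity you are trying to bound. Likewise, a tail bound on $|Z|$ around $0$ is the conclusion itself, and your ``doubling sequence'' bootstrap has no base case: integrability of $Z$ alone gives no quantitative starting point, so nothing prevents $\E|S_{v_0}-Z|$ from being of order $\E|Z|$ rather than $\sqrt{a+\sqrt{bv_0}}\,e^{-v_0/(2b)}$. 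In short, with only the stated hypotheses you cannot control how large $|S_{v_0}-Z|$ is on the rare event $\{S_{v_0}\neq Z\}$, so the claimed estimate for $\E|S_{v_0}-Z|$ is unsupported.

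The paper (following Adamczak) sidesteps this by replacing the mean with the \emph{median} of $Z$ as the intermediate center. From
\[
\Pr\bigl(|Z-\E[S_v]|\ge u\bigr)\le \Pr\bigl(|S_v-\E[S_v]|\ge u\bigr)+\Pr(S_v\neq Z),
\]
one chooses $u$ and $v$ so that the right side drops below $1/2$; this pins down $|\mathrm{Med}(Z)-\E[S_v]|\le u$ \emph{without any integration or moment information}, and yields concentration of $Z$ around $\mathrm{Med}(Z)$ with explicit constants. Only afterwards is $|\E[Z]-\mathrm{Med}(Z)|$ bounded by integrating the (now available) median-centred tail, and concentration is transferred to the mean. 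The missing idea in your proposal is precisely this median step: it provides a $v$-independent center that can be located from tail information alone, breaking the circularity.
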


\begin{proof}
    Let $\mathrm{Med}(Z)$ denote the median of $Z$.
    By the proof of Lemma 3.1 of \citet{adamczak2015note}, we know that for any $v>0$,
    \begin{equation}
        \Pr\left(|Z-\mathrm{Med}(Z)|\ge v\right)\le 27\exp\left(-\min\left\{\frac{v^{2}}{4a^{2}},\frac{v}{b}\right\}\right).
    \end{equation}
    We also obtain that
    \begin{equation*}
        \left|\E[Z]-\mathrm{Med}(Z)\right|\le \E\left[\left|Z-\mathrm{Med}(Z)\right|\right]\le 27\int_{0}^{\infty}\exp\left(-\min\left\{\frac{v^{2}}{4a^{2}},\frac{v}{b}\right\}\right)\diff v\le 27(\sqrt{\pi}a+b).
    \end{equation*}
    For $v>(22/21)(27(\sqrt{\pi}a+b))$, we have
    \begin{align*}
        \Pr\left(\left|Z-\E[Z]\right|\ge v\right)\le \Pr\left(\left|Z-\mathrm{Med}(Z)\right|\ge v/22\right)&\le \min\left\{1,27\exp\left(-\min\left\{\frac{v^{2}}{(44a)^{2}},\frac{v}{22b}\right\}\right)\right\}\\
        &\le \min\left\{1,2\exp\left(-\frac{1}{5}\min\left\{\frac{v^{2}}{(44a)^{2}},\frac{v}{22b}\right\}\right)\right\}.
    \end{align*}
    If $v\le (22/21)(27(\sqrt{\pi}a+b))$, then it is sufficient to see that
    \begin{equation}
        \frac{1}{\log2}\min\left\{\frac{v^{2}}{(44a)^{2}},\frac{v}{22b}\right\}\le 6.
    \end{equation}
    If $a\ge b$,
    \begin{equation*}
        \frac{1}{\log2}\min\left\{\frac{v^{2}}{(44a)^{2}},\frac{v}{22b}\right\}\le \frac{1}{\log2}\frac{(22/21)^{2}(27(\sqrt{\pi}a+b))^{2}}{(44a)^{2}}=\frac{(27)^{2}(\sqrt{\pi}+b/a)^{2}}{(42)^{2}\log2}< 5,
    \end{equation*}
    and if $b\ge a$,
    \begin{equation*}
        \frac{1}{\log2}\min\left\{\frac{v^{2}}{(44a)^{2}},\frac{v}{22b}\right\}\le \frac{1}{\log2}\frac{27(\sqrt{\pi}a+b)}{21b}\le \frac{27(\sqrt{\pi}+1)}{21\log2}< 6.
    \end{equation*}
    We obtain the conclusion.
\end{proof}

\begin{proof}[Proof of Proposition \ref{prop:bernstein}]
We only prove the first statement.
The proof is partially adopted from \citet{adamczak2015note}.
We have that for any $\bz,\bz'\in\R^{np}$,
\begin{equation}
    \sup_{\btheta\in\PS}G_{t}^{\btheta}(\bz)-\sup_{\btheta\in\PS}G_{t}^{\btheta}(\bz')\le \sqrt{\frac{2t\left\|\bSigma\right\|}{\pi n}}\left(4+\sqrt{\frac{R^{2}}{16n}\left(\sum_{i=1}^{n}\left\langle \bLambda\bz_{i},\bz_{i}\right\rangle\right)\vee\left(\sum_{i=1}^{n}\left\langle \bLambda\bz_{i}',\bz_{i}'\right\rangle\right)}\right)\left\|\bz-\bz'\right\|.
\end{equation}
We fix a positive number $v>0$, a convex set 
\begin{equation}
    B:=\left\{\bz\in\R^{np};\sqrt{\sum_{i=1}^{n}\left\langle \bLambda\bz_{i},\bz_{i}\right\rangle}<\E\left[\sqrt{\sum_{i=1}^{n}\left\langle \bLambda\bZ_{i},\bZ_{i}\right\rangle}\right]+\sqrt{v\|\bSigma\|}\right\}\subset\R^{np}
\end{equation}
and $((16\sqrt{n}/R+\E[\sqrt{(\sum_{i=1}^{n}\langle \bLambda\bZ_{i},\bZ_{i}\rangle)}])/\sqrt{\|\bSigma\|}+\sqrt{v})$-Lipschitz function $\varphi:\R^{np}\to\R$ such that for all $\bz\in B$,
\begin{equation}
    \varphi(\bz)=\sqrt{\frac{16\pi n^{2}}{2tR^{2}\|\bSigma\|^{2}}}\sup_{\btheta\in\PS}G_{t}^{\btheta}(\bz).
\end{equation}
Then by the concentration property and the Cauchy--Schwarz inequality, for all $u>0$,
\begin{align}
    \Pr\left(\left|\varphi(\bZ)-\E[\varphi(\bZ)]\right|\ge u\right)&\le 2\exp\left(-\frac{u^{2}}{K^{2}\left(\left(16\sqrt{n}/R+\E\left[\sqrt{(\sum_{i=1}^{n}\langle \bLambda\bZ_{i},\bZ_{i}\rangle)}\right]\right)/\sqrt{\|\bSigma\|}+\sqrt{v}\right)^{2}}\right)\notag\\
    &\le 2\exp\left(-\frac{u^{2}}{K^{2}\left(\left(16\sqrt{n}/R+\sqrt{n\bSigma}\right)/\sqrt{\|\bSigma\|}+\sqrt{v}\right)^{2}}\right).
\end{align}
The concentration property and the $1$-Lipschitz continuity of $\sqrt{\sum_{i=1}^{n}\langle \bLambda\bz_{i},\bz_{i}\rangle}/\sqrt{\|\bSigma\|}$ also yield that for all $v>0$,
\begin{equation}
    \Pr\left(\sqrt{\left(\sum_{i=1}^{n}\left\langle \bLambda\bZ_{i},\bZ_{i}\right\rangle\right)}\ge  \E\left[\sqrt{\left(\sum_{i=1}^{n}\left\langle \bLambda\bZ_{i},\bZ_{i}\right\rangle\right)}\right]+\sqrt{v\|\bSigma\|}\right)\le 2\exp\left(-\frac{v}{K^{2}}\right).
\end{equation}
Hence by Lemma \ref{lem:tech:adamczak} with $a=K((16\sqrt{n}/R)+\sqrt{n\tr(\bSigma)})/\sqrt{\|\bSigma\|}$ and $b=K^{2}$, for all $u>0$,
\begin{align*}
    &\Pr\left(\sqrt{\frac{16\pi n^{2}}{2tR^{2}\|\bSigma\|^{2}}}\left|\sup_{\btheta\in\PS}G_{t}^{\btheta}(\bZ)-\E\left[\sup_{\btheta\in\PS}G_{t}^{\btheta}(\bZ)\right]\right|\ge u\right)\\
    &\le 2\exp\left(-\frac{1}{6K^{2}}\min\left\{\frac{u^{2}}{44^{2}\left(16\sqrt{n}/R+\sqrt{n\tr(\bSigma)}\right)^{2}/\|\bSigma\|^{2}},\frac{u}{22}\right\}\right).
\end{align*}
Therefore,
\begin{align*}
    &\Pr\left(\left|\sup_{\btheta\in\PS}G_{t}^{\btheta}(\bZ)-\E\left[\sup_{\btheta\in\PS}G_{t}^{\btheta}(\bZ)\right]\right|\ge u\right)\\
    &\le 2\exp\left(-\frac{1}{6K^{2}}\min\left\{\frac{n^{2}u^{2}}{11^{2}R^{2}(2t/\pi)\left(\frac{16\sqrt{n}}{R}+\sqrt{n\tr(\bSigma)}\right)^{2}},\frac{2nu}{11R\sqrt{2t/\pi}\|\bSigma\|}\right\}\right)\\
    &\le 2\exp\left(-\frac{n}{6K^{2}}\min\left\{\frac{u^{2}}{121(2t/\pi)\left(16+R\sqrt{\tr(\bSigma)}\right)^{2}},\frac{u}{5R\sqrt{t}\|\bSigma\|}\right\}\right)\\
    &\le 2\exp\left(-\frac{n}{6K^{2}}\min\left\{\frac{u^{2}}{155t\left(256+R^{2}\tr\left(\bSigma\right)\right)},\frac{u}{5\sqrt{t}R\|\bSigma\|}\right\}\right).
\end{align*}
Hence we obtain the conclusion.
\end{proof}
\subsection{Proof of Proposition \ref{prop:expsup}}
\begin{lemma}\label{lem:absolConst}
    For arbitrary function $f\in\mathcal{C}_{b}^{2}(\R;\R)$ and $t\in[0,1]$, we have
    \begin{equation}
        \int_{0}^{t}\frac{1}{s}\int_{-\infty}^{\infty}f\left(\sqrt{s}x\right)\left(x^{2}-1\right)\frac{\exp(-x^{2}/2)}{\sqrt{2\pi}}\diff x\diff s=2\left(\int_{-\infty}^{\infty}f(\sqrt{t}x)\frac{\exp(-x^{2}/2)}{\sqrt{2\pi}}\diff x-f(0)\right).
    \end{equation}
\end{lemma}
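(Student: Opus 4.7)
The key observation is that $x^{2}-1 = H_{2}(x)$, the second Hermite polynomial, so the inner integral is an expectation against a Hermite weight and can be reduced to a second derivative of $f$ by Lemma \ref{lem:hermite}. Concretely, I would fix $s \in (0,t]$, set $g_{s}(x) := f(\sqrt{s}\,x)$, and note that $g_{s}'(x) = \sqrt{s}\,f'(\sqrt{s}\,x)$ and $g_{s}''(x) = s\,f''(\sqrt{s}\,x)$. Since $f \in \calC_{b}^{2}(\R;\R)$, we have $g_{s} \in \calC_{p}^{2}(\R;\R)$, so Lemma \ref{lem:hermite}-(ii) with $d=0$ applied to $g_{s}$ gives
\begin{equation*}
    \int_{-\infty}^{\infty}f(\sqrt{s}\,x)(x^{2}-1)\frac{\exp(-x^{2}/2)}{\sqrt{2\pi}}\diff x
    = \E[g_{s}(\zeta)H_{2}(\zeta)] = \E[g_{s}''(\zeta)] = s\,\E[f''(\sqrt{s}\,\zeta)],
\end{equation*}
where $\zeta \sim \Gaussian(0,1)$. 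The factor $s$ cancels the $1/s$ in the outer integral, reducing the left-hand side to $\int_{0}^{t}\E[f''(\sqrt{s}\,\zeta)]\diff s$. Note that since $f'' \in \calC_{b}^{0}$, the integrand is bounded, so Fubini is trivially justified and the $s \downarrow 0$ limit is harmless.

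The second step is to evaluate this time integral. The cleanest route is It\^{o}'s formula applied to $f(\BM_{t})$, where $\{\BM_{s}\}$ is a one-dimensional standard Brownian motion starting at $0$ (so that $\BM_{s} \stackrel{d}{=} \sqrt{s}\,\zeta$):
\begin{equation*}
    f(\BM_{t}) = f(0) + \int_{0}^{t}f'(\BM_{s})\diff \BM_{s} + \frac{1}{2}\int_{0}^{t}f''(\BM_{s})\diff s.
\end{equation*}
Because $f' \in \calC_{b}^{0}$, the stochastic integral is a genuine martingale with zero mean, and boundedness of $f$ justifies the interchange of expectation and the Lebesgue integral. Taking expectations therefore yields
\begin{equation*}
    \E[f(\sqrt{t}\,\zeta)] - f(0)
    = \frac{1}{2}\int_{0}^{t}\E[f''(\sqrt{s}\,\zeta)]\diff s,
\end{equation*}
which, combined with the first step, gives exactly the claimed identity. (Alternatively, one can set $u(t) := \E[f(\sqrt{t}\,\zeta)]$, observe by Stein's lemma that $u'(t) = \frac{1}{2}\E[f''(\sqrt{t}\,\zeta)]$, and integrate from $0$ to $t$, avoiding stochastic calculus entirely.)

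There is no genuinely hard step: the whole argument is a two-line manipulation once one recognizes the Hermite weight. The only point requiring care is the justification of differentiating under the integral (in the Stein-lemma variant) or the martingale property of the It\^{o} integral (in the Brownian variant), both of which follow immediately from $f \in \calC_{b}^{2}$.
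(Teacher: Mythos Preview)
Your proof is correct and is in fact more streamlined than the paper's. The paper does \emph{not} first cancel the $1/s$ via the Hermite identity; instead it performs the change of variables $s=e^{-2r}$ to rewrite the left-hand side as $2\int_{T_{0}}^{\infty}\E[f(e^{-r}X_{r})(X_{r}^{2}-1)]\diff r$ with $T_{0}=\log(1/\sqrt{t})$, and then applies It\^{o}'s formula to $r\mapsto f(e^{-r}X_{r})$ where $\{X_{r}\}$ is a \emph{stationary Ornstein--Uhlenbeck process} (so $X_{r}\sim\Gaussian(0,1)$ for all $r$). The Hermite relation (Lemma~\ref{lem:hermite}-(ii) with $d=0$) is invoked only inside that It\^{o} computation to turn the second-derivative term $\partial_{x}^{2}f(e^{-r}x)$ back into $f(e^{-r}X_{r})(X_{r}^{2}-1)$, and the identity follows by letting the terminal time $T_{1}\to\infty$.

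Your route---apply the Hermite identity first to kill the $1/s$ singularity, then recognize the remaining integral as the Dynkin/It\^{o} expansion of $\E[f(\BM_{t})]-f(0)$ for ordinary Brownian motion---avoids both the time change and the OU process, and the alternative you sketch (differentiate $u(t)=\E[f(\sqrt{t}\,\zeta)]$ via Stein's lemma) avoids stochastic calculus altogether. What the paper's OU argument buys is a certain structural unity with the rest of the manuscript, which repeatedly exploits stationary Gaussian semigroups and It\^{o}'s formula; what your argument buys is brevity and the elimination of any limiting step at the boundary.
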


\begin{proof}
    We change variables $s=e^{-2r}$ ($\diff s=-2e^{-2r}\diff r=-2s\diff r$) and then for $T_{0}:=\log(1/\sqrt{t})$,
    \begin{equation}
        \int_{0}^{t}\frac{1}{s}\int_{-\infty}^{\infty}f\left(\sqrt{s}x\right)\left(x^{2}-1\right)\frac{\exp\left(-x^{2}/2\right)}{\sqrt{2\pi}}\diff x\diff s=2\int_{T_{0}}^{\infty}\int_{-\infty}^{\infty}f(e^{-r}x)\left(x^{2}-1\right)\frac{\exp\left(-x^{2}/2\right)}{\sqrt{2\pi}}\diff x\diff r.\label{eq:lemma:absolConst:1}
    \end{equation}
    Let us consider a $1$-dimensional stationary Ornstein--Uhlenbeck process $\{X_{t};t\ge0\}$ defined by the following stochastic differential equation:
    \begin{equation}
        \diff X_{t}=-X_{t}\diff t+\sqrt{2}\diff W_{t},\ X_{0}\sim\Gaussian(0,1),
    \end{equation}
    where $\{W_{t};t\ge0\}$ is a $1$-dimensional standard Brownian motion independent of $X_{0}$.  
    Note that for each $t\ge0$, $X_{t}\sim \Gaussian(0,1)$.

    For any $T_{1}\ge T_{0}$,
    \begin{align*}
        \E\left[f\left(e^{-T_{1}}X_{T_{1}}\right)\right]&=\E\left[f\left(e^{-T_{0}}X_{T_{0}}\right)\right]+\int_{T_{0}}^{T_{1}}\E\left[\left.-X_{s}\partial_{x}f(e^{-s}x)+\partial_{s}f(e^{-s}x)+\partial_{x}^{2}f(e^{-s}x)\right|_{x=X_{s}}\right]\diff s\\
        &=\E\left[f\left(e^{-T_{0}}X_{T_{0}}\right)\right]+\int_{T_{0}}^{T_{1}}\E\left[\left.-2X_{s}\partial_{x}f(e^{-s}x)\right|_{x=X_{s}}+f(e^{-s}X_{s})(X_{s}^{2}-1)\right]\diff s\\
        &=\E\left[f\left(e^{-T_{0}}X_{T_{0}}\right)\right]-\int_{T_{0}}^{T_{1}}\E\left[f(e^{-s}X_{s})(X_{s}^{2}-1)\right]\diff s,
    \end{align*}
    by It\^{o}'s formula, Lemma \ref{lem:hermite}-(ii) with $d=0$, and the fact that $\partial_{s}f(e^{-s}x)=-e^{-s}xf'(e^{-s}x)=-x\partial_{x}f(e^{-s}x)$.
    Taking the limit $T_{1}\to\infty$, we obtain
    \begin{equation}
        \int_{T_{0}}^{\infty}\E\left[f(e^{-s}X_{s})(X_{s}^{2}-1)\right]\diff s=\E\left[f\left(e^{-T_{0}}X_{T_{0}}\right)\right]-f(0).
    \end{equation}
    This left-hand side is identical to the right-hand side of Eq.~\eqref{eq:lemma:absolConst:1} up to constant $2$.
\end{proof}

We show a concise version of Lemma 5 of \citet{meir2003generalization} for self-containedness; see also Theorem 4.12 of \citet{ledoux1991probability} and Proposition 4.3 of \citet{bach2024learning}.
\begin{lemma}\label{lem:rademacher:contraction}
    For arbitrary sequences of functions $\{a_{i}\}$ with $a_{i}:\PS\to\R$, a function $b:\PS\to\R$, and a sequence of $L_{i}$-Lipschitz functions $\{\varphi_{i}\}$ with $\varphi_{i}:\R\to\R$,
    \begin{equation}
        \E\left[\sup_{\btheta\in\PS}\left(b\left(\btheta\right)+\sum_{i=1}^{n}\varepsilon_{i}\varphi_{i}\left(a_{i}\left(\btheta\right)\right)\right)\right]\le \E\left[\sup_{\btheta\in\PS}\left(b\left(\btheta\right)+\sum_{i=1}^{n}\varepsilon_{i}L_{i}a_{i}\left(\btheta\right)\right)\right],
    \end{equation}
    where $\{\varepsilon_{i};i=1,\ldots,n\}$ is an i.i.d.~sequence of Rademacher random variables.
\end{lemma}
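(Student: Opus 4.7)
The plan is to prove the lemma by iteration, peeling off one Rademacher variable at a time. Concretely, I will first establish a single-variable version: for any functional $g:\PS\to\R$, any $L$-Lipschitz $\varphi:\R\to\R$, any function $a:\PS\to\R$, and an independent Rademacher variable $\varepsilon$,
\[
\E_{\varepsilon}\Bigl[\sup_{\btheta\in\PS}\bigl(g(\btheta)+\varepsilon\varphi(a(\btheta))\bigr)\Bigr]\le \E_{\varepsilon}\Bigl[\sup_{\btheta\in\PS}\bigl(g(\btheta)+\varepsilon L\, a(\btheta)\bigr)\Bigr].
\]
Granted this single-variable step, the full statement follows by induction on $n$: condition on $\varepsilon_1,\ldots,\varepsilon_{n-1}$ via the tower property, apply the single-variable step with the randomness-dependent functional $g(\btheta)=b(\btheta)+\sum_{i<n}\varepsilon_i\varphi_i(a_i(\btheta))$ so that $\varphi_n(a_n(\btheta))$ is replaced by $L_n a_n(\btheta)$; then condition next on $\varepsilon_1,\ldots,\varepsilon_{n-2},\varepsilon_n$, absorb the already-linearized $\varepsilon_n L_n a_n$ into the new $g$, and replace $\varphi_{n-1}(a_{n-1}(\btheta))$ by $L_{n-1}a_{n-1}(\btheta)$, and so on.

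For the single-variable step, I would expand the expectation over the two values $\pm 1$ and use the decoupling identity
\[
\E_\varepsilon\Bigl[\sup_{\btheta} F(\btheta,\varepsilon)\Bigr]=\tfrac{1}{2}\sup_{\btheta_1}F(\btheta_1,+1)+\tfrac{1}{2}\sup_{\btheta_2}F(\btheta_2,-1)=\tfrac{1}{2}\sup_{\btheta_1,\btheta_2}\bigl(F(\btheta_1,+1)+F(\btheta_2,-1)\bigr).
\]
Applied to both sides, the inequality reduces to
\[
\sup_{\btheta_1,\btheta_2}\bigl(g(\btheta_1)+g(\btheta_2)+\varphi(a(\btheta_1))-\varphi(a(\btheta_2))\bigr)\le \sup_{\btheta_1,\btheta_2}\bigl(g(\btheta_1)+g(\btheta_2)+L(a(\btheta_1)-a(\btheta_2))\bigr).
\]

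For any admissible pair $(\btheta_1,\btheta_2)$ on the left, the Lipschitz property gives $\varphi(a(\btheta_1))-\varphi(a(\btheta_2))\le L|a(\btheta_1)-a(\btheta_2)|$. The right-hand sup is invariant under the coordinate swap $\btheta_1\leftrightarrow\btheta_2$, so it equals
\[
\sup_{\btheta_1,\btheta_2}\bigl(g(\btheta_1)+g(\btheta_2)+L|a(\btheta_1)-a(\btheta_2)|\bigr),
\]
which dominates the left-hand side pointwise in $(\btheta_1,\btheta_2)$. Taking suprema yields the desired inequality.

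The main obstacle is essentially bookkeeping: the absolute value produced by the Lipschitz bound has to be reconciled with the signed linear expression on the right-hand side, and the resolution is the observation that the right-hand sup is self-symmetric under the swap of the two free variables. A secondary care point is the induction itself, since after each peeling step the inner functional $g$ depends on the remaining Rademacher variables, so one must invoke the contraction step conditionally and chain the bounds through the tower property; this is standard but must be written out carefully to avoid confusing the roles of the $\varepsilon_i$ already linearized with those still to be processed.
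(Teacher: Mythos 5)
Your proposal is correct and takes essentially the same route as the paper's proof: both expand the expectation over one Rademacher sign, decouple into a supremum over a pair $(\btheta,\btheta')$, apply the Lipschitz bound to get $L_m|a_m(\btheta)-a_m(\btheta')|$, drop the absolute value by the swap symmetry of the two-variable supremum, and then handle the remaining terms by induction. The only difference is presentational (you isolate a one-variable contraction step and iterate it conditionally, while the paper folds the same step into the induction directly), so there is nothing to add.
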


\begin{proof}
    We prove the statement by induction as \citet{meir2003generalization}.
    We consider $n=0$, and then the statement immediately holds.
    Let us assume that the statement is true for some $n=m-1$ with $m\ge 2$.
    It holds that
    \begin{align*}
        &\E\left[\sup_{\btheta\in\PS}\left(b\left(\btheta\right)+\sum_{i=1}^{m}\varepsilon_{i}\varphi_{i}\left(a_{i}\left(\btheta\right)\right)\right)\right]\\
        &=\frac{1}{2}\E\left[\sup_{\btheta\in\PS}\left(b\left(\btheta\right)+\sum_{i=1}^{m-1}\varepsilon_{i}\varphi_{i}\left(a_{i}\left(\btheta\right)\right)+\varphi_{m}\left(a_{m}\left(\btheta\right)\right)\right)\right]\\
        &\quad+\frac{1}{2}\E\left[\sup_{\btheta\in\PS}\left(b\left(\btheta\right)+\sum_{i=1}^{m-1}\varepsilon_{i}\varphi_{i}\left(a_{i}\left(\btheta\right)\right)-\varphi_{m}\left(a_{m}\left(\btheta\right)\right)\right)\right]\\
        &=\E\left[\sup_{\btheta,\btheta'\in\PS}\left(\frac{b\left(\btheta\right)+b\left(\btheta'\right)}{2}+\sum_{i=1}^{m-1}\varepsilon_{i}\frac{\varphi_{i}\left(a_{i}\left(\btheta\right)\right)+\varphi_{i}\left(a_{i}\left(\btheta'\right)\right)}{2}+\frac{\varphi_{m}\left(a_{m}\left(\btheta\right)\right)-\varphi_{m}\left(a_{m}\left(\btheta'\right)\right)}{2}\right)\right]\\
        &\le\E\left[\sup_{\btheta,\btheta'\in\PS}\left(\frac{b\left(\btheta\right)+b\left(\btheta'\right)}{2}+\sum_{i=1}^{m-1}\varepsilon_{i}\frac{\varphi_{i}\left(a_{i}\left(\btheta\right)\right)+\varphi_{i}\left(a_{i}\left(\btheta'\right)\right)}{2}+\frac{L_{m}\left|a_{m}\left(\btheta\right)-a_{m}\left(\btheta'\right)\right|}{2}\right)\right]\\
        &=\E\left[\sup_{\btheta,\btheta'\in\PS}\left(\frac{b\left(\btheta\right)+b\left(\btheta'\right)}{2}+\sum_{i=1}^{m-1}\varepsilon_{i}\frac{\varphi_{i}\left(a_{i}\left(\btheta\right)\right)+\varphi_{i}\left(a_{i}\left(\btheta'\right)\right)}{2}+\frac{L_{m}\left(a_{m}\left(\btheta\right)-a_{m}\left(\btheta'\right)\right)}{2}\right)\right]\\
        &=\frac{1}{2}\E\left[\sup_{\btheta\in\PS}\left(b\left(\btheta\right)+\sum_{i=1}^{m-1}\varepsilon_{i}\varphi_{i}\left(a\left(\btheta\right)\right)+L_{m}a_{m}\left(\btheta\right)\right)\right]\\
        &\quad+\frac{1}{2}\E\left[\sup_{\btheta\in\PS}\left(b\left(\btheta\right)+\sum_{i=1}^{m-1}\varepsilon_{i}\varphi_{i}\left(a\left(\btheta\right)\right)-L_{m}a_{m}\left(\btheta\right)\right)\right]\\
        &\le \frac{1}{2}\E\left[\sup_{\btheta\in\PS}\left(b\left(\btheta\right)+\sum_{i=1}^{m-1}\varepsilon_{i}L_{i}a_{i}\left(\btheta\right)+L_{m}a_{m}\left(\btheta\right)\right)\right]\\
        &\quad+\frac{1}{2}\E\left[\sup_{\btheta\in\PS}\left(b\left(\btheta\right)+\sum_{i=1}^{m-1}\varepsilon_{i}L_{i}a_{i}\left(\btheta\right)-L_{m}a_{m}\left(\btheta\right)\right)\right]\\
        &=\E\left[\sup_{\btheta\in\PS}\left(b\left(\btheta\right)+\sum_{i=1}^{m}\varepsilon_{i}L_{i}a_{i}\left(\btheta\right)\right)\right],
    \end{align*}
    where the second last inequality is given by the induction hypothesis.
\end{proof}

\begin{proof}[Proof of Proposition \ref{prop:expsup}]
    We only give proof of the first statement as the second statement is parallel.
    Proposition 4.2 of \citet{bach2024learning} yields that for each $t\in(0,1]$,
    \begin{equation}
        \E_{\bZ}\left[\sup_{\btheta\in\PS}G_{t}^{\btheta}\left(\bZ\right)\right]\le 2\E_{\bvarepsilon,\bZ}\left[\sup_{\btheta\in\PS}\frac{1}{2n}\sum_{i=1}^{n}\varepsilon_{i}\int_{0}^{t}\E_{\BM}\left[\sigma\left(1-\sigma\right)\left(\left\langle \tilde{\BM}_{s}^{\btheta},\bZ_{i}\right\rangle\right)\right]\diff s\left\langle \bLambda\bZ_{i},\bZ_{i}\right\rangle\right],
    \end{equation}
    where $\bvarepsilon=(\varepsilon_{1},\ldots,\varepsilon_{n})$ is an $\R^{n}$-valued random vector whose components are independent Rademacher random variables, which is also independent of $\BM$ and $\bZ$.
    We set $\lambda_{i}=\lambda_{i}(\bz_{i})=\sqrt{\langle \bLambda\bz_{i},\bz_{i}\rangle},\nu_{i}=\nu_{i}(\bz_{i},\btheta)=\lambda_{i}^{-1}\langle \bfU\bLambda^{1/2}\bz_{i},\btheta\rangle$, and $\zeta\sim\Gaussian(0,1)$; then $\E_{\BM}[\sigma(1-\sigma)(\langle \tilde{\BM}_{s}^{\btheta},\bZ_{i}\rangle)]=\E_{\zeta}[\sigma(1-\sigma)(\lambda_{i}(\nu_{i}+\sqrt{s}\zeta))].$
    By Lemma \ref{lem:hermite}, we have that for any $s>0$ and $i=1,\ldots,n$, 
    \begin{align*}
        \partial_{\nu_{i}}\E_{\zeta}\left[\sigma\left(1-\sigma\right)\left(\lambda_{i}(\nu_{i}+\sqrt{s}\zeta)\right)\right]
        &=\E_{\zeta}\left[\left(\sigma\left(1-\sigma\right)\right)'\left(\lambda_{i}(\nu_{i}+\sqrt{s}\zeta)\right)\right]\lambda_{i} \\
        &=\E_{\zeta}\left[(\partial_{\zeta}^{2}\sigma)\left(\lambda_{i}(\nu_{i}+\sqrt{s}\zeta)\right)\right]\frac{\lambda_{i}}{s\lambda_{i}^{2}}\\
        &=\E_{\zeta}\left[\sigma\left(\lambda_{i}(\nu_{i}+\sqrt{s}\zeta)\right)\left(\zeta^{2}-1\right)\right]\frac{1}{s\lambda_{i}};
    \end{align*}
    the interchange of limiting operations in the first line holds as the partial derivative of the integrand on the left-hand side is dominated by $\lambda_{i}\|(\sigma(1-\sigma))'\|_{\infty}<\infty$ on $\R$.
    The left-hand side itself is also dominated by $\lambda_{i}\|(\sigma(1-\sigma))'\|_{\infty}$ on $(0,t]$.
    Therefore, Lemma \ref{lem:absolConst} gives that for any $t\in(0,1]$,
    \begin{align*}
        \left|\partial_{\nu_{i}}\int_{0}^{t}\E_{\zeta}\left[\sigma\left(1-\sigma\right)\left(\lambda_{i}(\nu_{i}+\sqrt{s}\zeta)\right)\right]\diff s\right|
        &= \left|\int_{0}^{t}\partial_{\nu_{i}}\E_{\zeta}\left[\sigma\left(1-\sigma\right)\left(\lambda_{i}(\nu_{i}+\sqrt{s}\zeta)\right)\right]\diff s\right|\\
        &=\left|\int_{0}^{t} \E_{\zeta}\left[\sigma\left(\lambda_{i}(\nu_{i}+\sqrt{s}\zeta)\right)\left(\zeta^{2}-1\right)\right]\frac{1}{s\lambda_{i}}\diff s\right|\\
        &= \frac{2}{\lambda_{i}}\left|\E\left[\sigma(\lambda_{i}(\nu_{i}+\sqrt{t}\zeta))\right]-\sigma(\lambda_{i}(\nu_{i}))\right|\\
        &\le \frac{2}{\lambda_{i}}.
    \end{align*}
    Therefore, for any $t\in(0,1]$,
    \begin{equation}
        \left|\partial_{\nu_{i}}\int_{0}^{t}\E_{\zeta}\left[\sigma\left(1-\sigma\right)\left(\lambda_{i}(\nu_{i}+\sqrt{s}\zeta)\right)\right]\diff s\left\langle\bLambda\bz_{i},\bz_{i}\right\rangle\right|\le 2\lambda_{i}.
    \end{equation}
    By Lemma \ref{lem:rademacher:contraction} (with $a_{i}=\nu_{i}$, $b=0$, and $L_{i}=2\lambda_{i}$), we obtain that for all $t\in(0,1]$ and $\bz\in\R^{np}$,
    \begin{align*}
        \E_{\bvarepsilon}\left[\sup_{\btheta\in\PS}\sum_{i=1}^{n}\varepsilon_{i}\int_{0}^{t}\E_{\BM}\left[\sigma\left(1-\sigma\right)\left(\left\langle \tilde{\BM}_{s}^{\btheta},\bz_{i}\right\rangle\right)\right]\diff s\left\langle \bLambda\bz_{i},\bz_{i}\right\rangle\right]
        &\le 2\E_{\bvarepsilon}\left[\sup_{\btheta\in\PS}\sum_{i=1}^{n}\varepsilon_{i}\left\langle \bfU\bLambda^{1/2}\bz_{i},\btheta\right\rangle\right]\\
        &= 2R\E_{\bvarepsilon}\left[\left\| \sum_{i=1}^{n}\varepsilon_{i}\bLambda^{1/2}\bz_{i}\right\|\right]\\
        &\le 2R\E_{\bvarepsilon}\left[\left\| \sum_{i=1}^{n}\varepsilon_{i}\bLambda^{1/2}\bz_{i}\right\|^{2}\right]^{1/2}\\
        &=2R\left(\sum_{i=1}^{n}\left\langle\bLambda\bz_{i},\bz_{i}\right\rangle\right)^{1/2}
    \end{align*}
    and thus
    \begin{equation}
         \E_{\bZ}\left[\sup_{\btheta\in\PS}G_{t}^{\btheta}\left(\bZ\right)\right]\le \frac{2R}{n}\sqrt{\sum_{i=1}^{n}\E_{\bZ}\left[\left\langle \bLambda\bZ_{i},\bZ_{i}\right\rangle\right]}
         \le 2R\sqrt{\frac{\tr(\bSigma)}{n}}
    \end{equation}
    by Jensen's inequality.
    Therefore, the statement holds true.
\end{proof}
\end{appendices}

\bibliographystyle{apalike}
\bibliography{bibliography}

\end{document}